\documentclass[12pt]{article}

\usepackage{verbatim}
\usepackage{amsmath}
\usepackage{amsthm}
\usepackage{eucal}
\usepackage{amssymb}
\usepackage{epsfig}
\topmargin -0.5in
\textheight 23cm
\textwidth 16.5cm
\oddsidemargin -1mm
\evensidemargin 0.25in 
\newtheorem{theo}{Theorem}
\newtheorem{prop}[theo]{Proposition}

\newtheorem{lemma}[theo]{Lemma}
\newcommand {\pare}[1] {\left( {#1} \right)}
\newcommand {\cro}[1] {\left[ {#1} \right]}
\newcommand {\acc}[1] {\left\{ {#1} \right\}}
\newcommand {\nor}[1] { \left\| {#1} \right\|}
 
\def \R  {\mathbb{R}} 
\def \T  {\mathbb{T}} 
\def \Z  {\mathbb{Z}}
\def \CC {{\cal C}}
\def \FF {{\cal F}}
\def \ind {\hbox{ 1\hskip -3pt I}}
\newcommand {\va}[1] {\left| {#1} \right|}
\def \card {\mbox{card}}

\begin{document}
\title{\textsc{Large deviations for self-intersection local times of stable random walks}}
\author{Cl\'{e}ment Laurent}
\maketitle
\begin{abstract}
Let $(X_t,t\geq 0)$ be a random walk on $\mathbb{Z}^d$. 
Let   $ l_T(x)=  \int_0^T \delta_x(X_s)ds$ the local time at the state $x$ and
$ I_T= \sum\limits_{x\in\mathbb{Z}^d} l_T(x)^q $ the q-fold self-intersection local time (SILT). In \cite{Castell} Castell proves a large deviations principle for the SILT of the simple random walk in the critical case $q(d-2)=d$. 
In the supercritical case $q(d-2)>d$, Chen and M\"orters obtain in \cite{ChenMorters} a large deviations principle for the intersection of $q$ independent random walks , and Asselah obtains in \cite{Asselah5} a large deviations principle for the SILT with $q=2$. We extend these results to an $\alpha$-stable process (i.e. $\alpha\in]0,2]$) 
in the case where $q(d-\alpha)\geq d$.
\end{abstract}
\textit{AMS 2010 subject classification:} 60F10, 60J55, 60J27, 60G50.\\
\textit{Key words:} Large deviations, stable random walks, intersection local time, self-intersections.

\section{Introduction}
Let $(X_t,t\geq 0)$ be a continous time random walk on $\mathbb{Z}^d$ with jump rate $1$, whose generator is denoted $A$: 
$$Af(x)=\sum\limits_{y\in\mathbb{Z}^d}\mu(y-x)(f(y)-f(x))$$
where $\mu$ is the law of
the increment. We assume that $\mu$ is in the domain of attraction of a stable law of index $\alpha$ and that $\mu$ is symmetric. More precisely we assume the following assumption:
\paragraph{Assumption 1:}
\begin{itemize}
\item
$\exists\ c_1,c_2>0 \text{ such that }\forall x,y\in\mathbb{Z}^d,\ \frac{c_1}{\vert y-x\vert^{d+\alpha}} \leq \mu(y-x) \leq \frac{c_2}{\vert y-x\vert^{d+\alpha}}$.
\item $\mu$ is symmetric.
\end{itemize}
 In this article we are interested in the q-fold self intersection local time (SILT), i.e.:
\[ I_T= \sum\limits_{x\in\mathbb{Z}^d} l_T(x)^q \text{\ \ with\ \ }       l_T(x)=  \int_0^T \delta_x(X_s)ds. \] 

The study of self-intersection is naturally arising from both probability and physics. In probability this quantity naturally arises from study of random walk in random scenery for instance. In physics we can cite the Polaron problem in quantum mechanics and the study of polymers in statistical mechanic. For the latter, represent a polymer as a chain of $N$ molecules which is considered as a random walk $(X_n,n\in [0,N])$. Physicists study measures of the form $\exp(-\beta I_N)$ where $I_N$ is the discrete analogous of $I_T$. When $\beta<0$, the measure favors unfolded polymers with few intersections, whereas when $\beta>0$, the measure favors the self-intersections of the polymers.

To give an idea of the behaviour of $I_T$ to the reader, we focus on the most studied case with $\alpha=2$ and $q=2$, which means that we consider the $l_2$-norm of the local times of a random walk with finite variance. The first idea is to point out the very important role played by the transience or the recurrence of the walk. Of course, when the walk is recurrent (dimension 1 and 2), it will intersect itself much more than when it is transient (dimension $d\geq 3$). Hence the SILT will be much more large. More precisely for $d=1$, $I_T\sim T^{3/2}$; for $d=2$, $I_T\sim T\log(T)$; and for $d\geq 3$, the walk being transient it spends a time of order 1 at each site and $I_T\sim T$.

The difference between recurrence and transience reappears in the central limit theorem. In dimension 1 and 2, we have a convergence to the local time of a Brownian motion (renormalized for $d=2$), while for $d=3$ a convergence to a normal law takes place:
\begin{itemize}
\item $d=1$: $\frac{I_T}{T^{3/2}}\xrightarrow {(d)} \gamma_1$, where $\gamma_1$ is the intersection local time of a Brownian motion.
\item $d=2$: $\frac{I_T-E[I_T]}{T}\xrightarrow {(d)}\gamma_1'$, where $\gamma_1'$ is the renormalized intersection local time of a Brownian motion.
\item $d\geq 3$:  $\frac{I_T-E[I_T]}{\sqrt{\text{Var}(I_T)}}\xrightarrow {(d)} \mathcal{N}(0,1)$.
\end{itemize}

Since the law of large numbers and limit laws have been established, it is natural to be interested in the large deviations of the SILT.

The large deviations are the study of rare events. In this article we  wonder how $I_T$ can exceed its mean, i.e. we compute the probability $P(I_T\geq b_T^q)$ where $b_T^q\gg E[I_T]$. Heuristically, it is interesting to ask how the walk can realize this kind of atypical event. We propose here a classical strategy for the walk to realize large deviations of its SILT.

Let us localize the walk on a ball of radius R up to time $\tau$.
As $\mu$ is in the domain of attraction of a stable law, there exists $(U_t,t\geq 0)$ a non degenerate stable process such that $\frac{1}{a(t)} X_t \longrightarrow U_1$, where $a(t)\sim t^\frac{1}{\alpha}$. On one hand, the walk arrives at the edge of the ball in $R^\alpha$ units of time and the probability of this localization is about $\exp (-\frac{\tau}{R^\alpha})$. 
On the other hand, the walk spends  about $\frac{\tau}{R^d}$ units of time on each site of the ball, so $I_T$ increases to $\pare{\frac{\tau}{R^d}}^q R^d=\tau^q R^{d(1-q)}$.
We want $I_T=b_T^q$, which gives $\tau=b_T R^\frac{d(q-1)}{q}$. Thus the probability of this localization is about $\exp\pare{-b_T R^{\frac{d(q-1)}{q}-\alpha}}$. Maximizing this quantity in $R$, we obtain three cases:
\begin{itemize}
\item $\frac{d(q-1)}{q}-\alpha>0\Leftrightarrow q(d-\alpha)>d$ (supercritical case): in this case the optimal choice for $R$ is $1$. A good strategy to realize the large deviations is to spend a time of order $b_T$ in a ball of radius $1$, and then: $P(I_T\geq b_T^q)\sim \exp(-b_T)$.
\item $\frac{d(q-1)}{q}-\alpha=0\Leftrightarrow q(d-\alpha)=d$ (critical case): here the choice of $R$ does not matter. Every strategy consisting in spending a time of order $b_T R^\frac{d(q-1)}{q}$ in a ball of radius $R$ such that $1\leq R\ll (T/b_T)^{1/\alpha}$ is a good strategy, so $P(I_T\geq b_T^q)\sim \exp(-b_T)$.
\item $\frac{d(q-1)}{q}-\alpha<0\Leftrightarrow q(d-\alpha)<d$ (subcritical case): a good strategy is to stay up to time $T$ in a ball of maximal radius, i.e. $\pare{\frac{T}{b_T}}^\frac{q}{d(q-1)}$, thus $P(I_T\geq b_T^q)\sim\exp\pare{-b_T\pare{\frac{b_T}{T}}^{\frac{\alpha q}{d(q-1)}-1}}$. 
\end{itemize}

The question of large deviations for the SILT of random walk has very studied in recent years. The knowledge of the case $\alpha =2$ is the most progressed. We make here a brief review of these results.
\begin{itemize}
\item
For $d=1$, Chen and Li obtain a large deviations principle in \cite{BassLi}, as they obtain similar results for Brownian motion.
\item 
For a large deviations principle in the case $d=2$, we refer to the work of Bass, Chen and Rosen \cite{BCR2}. They express the constant in term of the best possible constant in a Gagliardo-Nirenberg inequality.
\item
In \cite{Chen}, Chen obtains a large deviations principle for all the scales of  deviations for $d=3$ and $q=2$. For dimension 2 and 3, the main idea is to first establish the large deviations of $q$ independent random walk then to use the dyadic decomposition due to Westwater \cite{West}.
\item 
In the critical dimension $d=4$, a recent paper of Castell \cite{Castell} states a large deviations principle, the constant being given in term of the best possible constant in a Gagliardo-Nirenberg inequality. 
\item 
The case of the supercritical dimension $d\geq 5$ is treated in two papers. In \cite{ChenMorters}, Chen and M\"orters give a large deviations principle concerning mutual intersection local times of $q$ independent random walks in infinite time horizon, and Asselah obtains in \cite{Asselah5} a large deviations principle for the SILT of a symmetric random walk. The method used by Castell in \cite{Castell} and by Chen and M\"orters in \cite{ChenMorters} have the same idea at their core. Indeed, Chen and Rosen explicitely compute large moments of the SILT and Castell uses Einsenbaum's Theorem, whose proof is based on the computation of its large moments.
\end{itemize}
A recent book of Chen \cite{Chen} summarizes these results. We refer the interested reader to this work for a precise development of the subject.

In this article, we are interested in the case where $\alpha <2$, i.e. the $\alpha$-stable random walk.
Up to now only subcritical case $q(d-\alpha)<d$ is solved in three papers, \cite{BCR},\cite{ChenLiRosen} and \cite{ChenRosen}. In these three articles the authors obtain some large deviations principle, and give the constant in terms of the best possible constant in a Gagliardo-Nirenberg inequality. We briefly present these results.
\begin{itemize}
\item The case $\alpha >d$ (note that imply $d=1$) is solved by
Chen, Li and Rosen in \cite{ChenLiRosen}. They obtain a large deviations principle for the SILT.

\item The case $\alpha\leq d$ is studied in two articles. Bass, Chen and Rosen explore the specific case
$p=2$ and $\alpha\in]\frac{2d}{3},d]$ in \cite{BCR}. They show a large deviation principle for the SILT.
The idea of the proof is to first study the intersection of two independent random processes, then to use the dyadic decomposition due to Westwater.
\item To complete the picture in the case $q(d-\alpha)<d$, Chen and Rosen \cite{ChenRosen} obtain a large deviations principle for intersection of $q$ independent stable processes using Feynman-Kac type large deviations.
\end{itemize}

  This article contributes to the question of large deviations for the self-intersection local times. We get a large deviations principle in the critical and supercritical case (i.e. $q(d-\alpha)\geq d$). In this situation the local times of the $\alpha$-stable process do not exist and we have to consider the SILT of the random walk itself. We point out that our method allows us to consider the q-fold self intersection local times even if $q$ is a real number instead of $q$ is an integer. Moreover, denote by $Q_T$ the mutual intersection of $q$ independent random walks $(X_t^{(i)}, t\geq 0,1\leq i\leq q)$ defined by
  $$Q_T=\sum\limits_{x\in\mathbb{Z}^d}\prod_{i=1}^ql_T^{(i)}(x)=\int_0^T\cdot\cdot\cdot\int_0^T \ind_{X_{s_1}^{(1)}=\cdot\cdot\cdot =X_{s_q}^{(q)}}ds_1\cdot\cdot\cdot ds_q.$$
The upper bound of the large deviations principle for the SILT leads to an upper bound of large deviations for $Q_T$ by the following inequality:
$$Q_T^{1/q}=\pare{\sum\limits_{x\in\mathbb{Z}^d}\prod_{i=1}^ql_T^{(i)}(x)}^{1/q}\leq \pare{\prod_{i=1}^q\nor{l_T^{(i)}}_q}^{1/q}\leq \frac{1}{q}\sum_{i=1}^q\nor{l_T^{(i)}}_q.$$

As $q(d-\alpha)\geq d$ we have $\alpha <d$, which implies that the walk is transient. So $l_T(x)\sim 1$ and $I_T\sim T$ but of course $I_T\leq T^q$. Therefore we focus on the probability  $P(I_T\geq b_T^q)$ for $T\gg b_T\gg T^\frac{1}{q}$.\\
 \paragraph{Main results} $ $

 Let $G$ be the Green function of the random walk $(X_t,t\geq 0)$. Remark that as we have $\alpha <d$, the walk is transient, which gives that the Green function does exist. We use the following notations:
 
\begin{align*}
&\rho (q)= \sup\limits_{g} \acc {<g,Gg>, \text{ supp(g) compact, } \nor{g}_{(2q)'}=1}, \\
&\kappa (q)=\inf_f \acc{ 
\frac{<f,-Af>}{\nor{f}_{2q}^2}, \ \nor{f}_2=1} 
\end{align*}
where $<\cdot , \cdot >$ is the classical scalar product on $l^2(\mathbb{Z}^d)$ and $Gg(x)=\sum\limits_{y\in\mathbb{Z}^d}G(x-y)g(y)$.

\begin{prop}\label{kapparho}
Under assumption 1, if $q(d-\alpha)\geq d$, then $\kappa(q)$ is a non-degenerate constant and $\kappa(q)=\frac{1}{\rho(q)}$.
\end{prop}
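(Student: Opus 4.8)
The plan is to read $\kappa(q)=1/\rho(q)$ as a Legendre-type duality between the two variational problems and to prove it, together with non-degeneracy, in three steps: (i) finiteness and positivity of the two constants, (ii) an exact duality at finite volume, (iii) passage to the limit. I would first record that $q(d-\alpha)\geq d$ forces $\alpha<d$ (hence $q>1$), so the walk is transient, $G$ exists, and --- under Assumption 1 --- one has the classical two-sided Green bound $c\,\va{x}^{\alpha-d}\leq G(x)\leq C\,\va{x}^{\alpha-d}$ for $x\neq 0$ (integrate in $t$ the stable-like estimate $p_t(x)\asymp t^{-d/\alpha}\wedge t\,\va{x}^{-d-\alpha}$).

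For Step (i), testing with $g=\delta_0$ gives $\rho(q)\geq G(0)>0$ and testing with $f=\delta_0$ gives $\kappa(q)<\infty$, so the real content is $\rho(q)<\infty$; this is where the hypothesis enters. For $g$ with compact support, Hölder's inequality and then Young's inequality (with exponents obeying $\tfrac{1}{2q}+1=\tfrac{1}{(2q)'}+\tfrac{1}{q}$) give
\[ <g,Gg>\ \leq\ \nor{g}_{(2q)'}\,\nor{Gg}_{2q}\ \leq\ C'\,\nor{g}_{(2q)'}^{2}, \]
the last step using $G\in\ell^{q}(\Z^d)$ when $q(d-\alpha)>d$ (exactly the case $\sum_{x}\va{x}^{(\alpha-d)q}<\infty$) and, in the critical case $q(d-\alpha)=d$, the membership $G\in\ell^{q,\infty}(\Z^d)$ together with the weak-type Young / Hardy--Littlewood--Sobolev inequality. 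Thus $0<\rho(q)<\infty$.

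For Step (ii) I would localize to a box $B_N$, which avoids the nuisance that $Gg$ need not lie in $\ell^2$ (possible when $\alpha\geq d/2$, still compatible with the hypothesis). Let $M_N$ be the symmetric positive-definite matrix with $<f,M_Nf>=<f,-Af>$ for all $f$ supported in $B_N$, let $G_N=M_N^{-1}$ (the Green function of the walk killed on exiting $B_N$), and let $\rho_N,\kappa_N$ be the corresponding localized quantities --- the normalization $\nor{f}_2=1$ in $\kappa$ being immaterial since the ratio $<f,-Af>/\nor{f}_{2q}^2$ is scale-invariant. Completing the square gives $<g,G_Ng>=\sup_h\acc{2<g,h>-<h,M_Nh>}$ over $h$ supported in $B_N$; interchanging this supremum with the one over $\nor{g}_{(2q)'}=1$, using $\ell^{(2q)'}$--$\ell^{2q}$ duality, and then optimizing over the scale $h\mapsto th$, one gets
\[ \rho_N\ =\ \sup_h\acc{2\nor{h}_{2q}-<h,-Ah>}\ =\ \sup_h\frac{\nor{h}_{2q}^2}{<h,-Ah>}\ =\ \frac{1}{\kappa_N}. \]
For Step (iii), $G_N\uparrow G$ pointwise gives $\rho_N\uparrow\rho(q)$, while $\kappa_N\downarrow\kappa(q)$ since $\kappa_N$ is the infimum of the same ratio over functions supported in $B_N$ and finitely supported functions are dense in $\ell^2$, on which $<\cdot,-A\cdot>$ and $\nor{\cdot}_{2q}$ are continuous. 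Because $0<\rho(q)<\infty$ by Step (i), $\kappa(q)=\lim_N\kappa_N=\lim_N 1/\rho_N=1/\rho(q)$, and the non-degeneracy of $\kappa(q)$ follows.

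The step I expect to be the main obstacle is $\rho(q)<\infty$ in Step (i): it genuinely relies on Assumption 1 through the sharp Green asymptotics, and at the threshold $q(d-\alpha)=d$ it needs the endpoint (weak-type) form of the Young / Hardy--Littlewood--Sobolev inequality rather than its elementary version. By contrast the duality in Steps (ii)--(iii) is soft once one has localized to a finite box, where completing the square and interchanging the suprema are elementary.
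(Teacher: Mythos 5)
Your proof is correct, but it takes a genuinely different route from the paper's at both of the steps you flagged. For non-degeneracy, the paper establishes $\rho(q)<\infty$ directly only in the strictly supercritical case $q(d-\alpha)>d$, where $G\in\ell^q(\Z^d)$; at the threshold $q(d-\alpha)=d$ it instead proves $\kappa(q)>0$ via Varopoulos's equivalence between the on-diagonal heat kernel bound $p_t(x,y)\leq Ct^{-d/\alpha}$ (supplied by Bass--Levin) and the Sobolev inequality $\nor{f}_{2q}^2\leq C'<f,-Af>$, and then recovers $\rho(q)<\infty$ only indirectly, by contradiction through the rearrangement $<g,Gg>\geq\nor{Gg}_{2q}^2\,\kappa(q)$. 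Your endpoint weak-type Young argument, resting on $G\in\ell^{q,\infty}$, handles the threshold in the same stroke as the supercritical case and is arguably cleaner; to make it airtight you should state explicitly the discrete weak-type Young inequality $\nor{G*g}_{2q}\leq C\nor{g}_{(2q)'}$ and note that it holds for $q>1$ by Marcinkiewicz interpolation on $\Z^d$ --- it is not the plain Young inequality, since $G\notin\ell^q$ at the threshold, which is exactly why the paper declined to go this way. For the duality, the paper derives $1/\rho(q)\leq\kappa(q)$ as a by-product of the matching large-deviation bounds (Theorems \ref{fpgd} and \ref{tlb} together with Proposition \ref{rho1}), and the reverse inequality by the substitution $f=Gg/\nor{Gg}_{2q}$ directly in infinite volume, taking care that one may restrict to $g$ with $\nor{Gg}_2<\infty$. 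Your finite-volume Legendre computation --- complete the square against $M_N$, interchange the two suprema, rescale $h\mapsto th$, and let $N\to\infty$ --- gives both inequalities at once and is fully self-contained: it does not lean on the LDP, which is a real conceptual gain since Proposition~\ref{kapparho} is stated as a standalone identity. What the paper's route buys is robustness: the Varopoulos step needs only an on-diagonal heat-kernel upper bound, a weaker input than the sharp Green asymptotics your argument exploits; under Assumption~1 both are available, so no generality is lost here, but in settings with less precise kernel control the distinction could matter.
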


\begin{theo} {\bf Large deviations.} 
\label{ld}

 Assume that $q(d-\alpha)\geq d$ and $T \gg b_T \gg T^\frac{1}{q}$. Under assumption 1, we have:
\begin{equation} 
\label{ld.eq}
 \lim_{T \rightarrow \infty} \frac{1}{b_T} \log P \cro{I_T \geq b_T^q}
=  - \frac{1}{\rho (q)} \, .
\end{equation}
\end{theo}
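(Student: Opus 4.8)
The proof splits, as usual, into matching lower and upper bounds; the constant $1/\rho(q)$ in the statement is identified with the variational quantity $\kappa(q)$ via Proposition \ref{kapparho}, which we use to pass between the two (the lower bound is natural in terms of $\kappa(q)$, the upper one in terms of $\rho(q)$). Assumption 1 enters only through the two-sided estimate $G(x)\asymp\va{x}^{\alpha-d}$ (available since $\alpha<d$), and $q(d-\alpha)\ge d$ is used solely through Proposition \ref{kapparho}, i.e.\ through the finiteness and positivity of $\rho(q)$ and $\kappa(q)$; recall also $E[I_T]\asymp T$, so $\acc{I_T\ge b_T^q}$ is a genuine large deviation precisely because $b_T\gg T^{1/q}$.

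\textbf{Lower bound.} I would realize the confinement strategy of the introduction by a ground-state change of measure. Given $\varepsilon>0$, fix a compactly supported $f>0$ with $0\in\mbox{supp}(f)$, $\nor{f}_2=1$ and $<f,-Af>\ \le(\kappa(q)+\varepsilon)\,\nor{f}_{2q}^2$, and let $\widetilde{P}$ be the law of the $h$-transform of $(X_s)$ by $f$ started at $0$: a reversible, ergodic Markov chain on the finite set $\mbox{supp}(f)$ with stationary distribution $f^2$, and such that on $[0,\tau]$,
\[
\frac{dP}{d\widetilde{P}}\bigg|_{[0,\tau]}=\frac{f(0)}{f(X_\tau)}\,\exp\pare{\int_0^\tau\frac{Af}{f}(X_s)\,ds}.
\]
Take $\tau=\tau_T=(1+\varepsilon)\,b_T/\nor{f}_{2q}^2$. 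Under $\widetilde{P}$, the ergodic theorem gives $\tau_T^{-1}l_{\tau_T}(x)\to f^2(x)$ for each $x$, hence $\tau_T^{-q}I_{\tau_T}\to\nor{f}_{2q}^{2q}$, so that $I_{\tau_T}\ge b_T^q$ with $\widetilde{P}$-probability tending to $1$; moreover $\tau_T^{-1}\int_0^{\tau_T}\frac{Af}{f}(X_s)\,ds\to\ <f,Af>\ =\ -<f,-Af>$, while $f(X_{\tau_T})$ stays bounded below (the chain never exits $\mbox{supp}(f)$). Integrating the above density over the intersection of these events yields, for all large $T$,
\[
P\cro{I_T\ge b_T^q}\ \ge\ P\cro{I_{\tau_T}\ge b_T^q}\ \ge\ c\,\exp\pare{-(1+o(1))\,\tau_T\,<f,-Af>}\ \ge\ \exp\pare{-b_T\,(\kappa(q)+2\varepsilon)};
\]
letting $\varepsilon\downarrow0$ gives $\liminf_T\frac1{b_T}\log P\cro{I_T\ge b_T^q}\ge-\kappa(q)=-1/\rho(q)$. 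Note $\tau_T\asymp b_T\ll T$, so there is enough time, and $\tau_T\to\infty$ exceeds the order-one relaxation time of the confined chain.

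\textbf{Upper bound.} Here a truncated moment estimate suffices: for every integer $n$, $P\cro{I_T\ge b_T^q}\le b_T^{-qn}\,E[I_T^n]$. Expanding $I_T^n=(\sum_x l_T(x)^q)^n$ and applying the Markov property rewrites $E[I_T^n]$ as a weighted sum of products of truncated Green-function kernels $G_T(y-x)=\int_0^T p_t(x,y)\,dt$ along walks visiting $n$ sites, each $q$ times. The crucial estimate — the heart of the matter — is that for $n\gg T^{1/q}$,
\[
E\cro{I_T^{\,n}}\ \le\ (qn)!\;\rho(q)^{\,qn}\,(1+o(1))^{\,n},
\]
the constant $\rho(q)=\sup\acc{<g,Gg>:\mbox{supp}(g)\text{ compact},\ \nor{g}_{(2q)'}=1}$ emerging, through $G(x)\asymp\va{x}^{\alpha-d}$ and $q(d-\alpha)\ge d$, as the \emph{exact} exponential rate of that kernel sum. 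Granting this, taking $n=n_T=\lceil b_T/(q\rho(q))\rceil$ — which lies in the admissible window because $T^{1/q}\ll b_T\ll T$ — and applying Stirling's formula gives
\[
\frac1{b_T}\log P\cro{I_T\ge b_T^q}\ \le\ \frac1{b_T}\pare{\log(qn_T)!+qn_T\log\rho(q)-qn_T\log b_T}+o(1)\ \longrightarrow\ -\frac1{\rho(q)}.
\]
For non-integer $q$ one replaces $\nor{l_T}_q$ throughout by $\sup\acc{<l_T,g>:g\ge0,\ \nor{g}_{q'}=1}$ and carries out the same expansion over a finite net of such $g$; alternatively, the required moments may be produced by Eisenbaum's isomorphism, as Castell does in \cite{Castell}.

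\textbf{Main obstacle.} Everything hinges on the moment bound: extracting the \emph{exact} exponential rate $\rho(q)^{qn}$ — not merely $O(\rho(q)^{qn})$ — from the $n$-th moment of $I_T$, uniformly over the relevant range of $n$. This demands a careful analysis of the combinatorics of the truncated Green-function chains (in particular the interplay of the ``exploration'' steps, each contributing a factor $\sum_y G_T(y)=T$, with the ``return'' steps, which generate the self-intersections), control of the compact-support constraint built into $\rho(q)$, and the passage from the discrete kernel sum to the continuum value of the variational problem. The hypothesis $T^{1/q}\ll b_T\ll T$ is exactly what places the optimal exponent $n_T\asymp b_T$ in the window where the bound is sharp: $n_T\gg T^{1/q}$, and $n_T\ll T$ so that the estimate is not superseded by the trivial bound $I_T\le T^q$ — while $b_T\ll T$ is independently needed for the matching lower bound, the walk having to localize for a time of order $b_T$.
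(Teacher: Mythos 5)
Your lower bound is sound and is essentially the paper's: the explicit $h$-transform tilting you write out is the standard proof of the Donsker--Varadhan lower bound, which the paper simply invokes as a black box (Theorem \ref{tlb}) together with the lower semicontinuity of $\nu\mapsto\nor{\nu}_q$; the only point to tidy up is that $\kappa(q)$ is an infimum over all $f\in l^2$, so you must justify that compactly supported $f$ approach it (a routine truncation). The identification $\kappa(q)=1/\rho(q)$ and the non-degeneracy you correctly delegate to Proposition \ref{kapparho}.

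The upper bound, however, has a genuine gap: the entire argument rests on the moment estimate $E\cro{I_T^n}\leq (qn)!\,\rho(q)^{qn}(1+o(1))^n$ with the \emph{exact} constant, and you do not prove it --- you name it as the ``main obstacle'' and stop there. This is not a detail: it is the whole difficulty, and it is precisely what the paper avoids by taking a different route (Eisenbaum's isomorphism theorem plus Gaussian concentration, Theorem \ref{fpgd} and Lemmas \ref{majoesp}--\ref{ldfz.lem}, followed by Proposition \ref{rho1} to pass from $\rho_1$ to $\rho(q)$). Two concrete reasons why your sketch cannot be accepted as it stands. First, the high-moment method with sharp constants is carried out in \cite{ChenMorters} for the \emph{mutual} intersection $Q_T$ of $q$ independent walks, where the expansion factorizes over the walks; for the self-intersection $I_T$ the $qn$ time points all belong to one trajectory and the diagonal/ordering combinatorics do not reduce to the same kernel sum --- this is exactly why \cite{Asselah5} and \cite{Castell} develop other methods. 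Second, your claim that $\rho(q)$ ``emerges through $G(x)\asymp\va{x}^{\alpha-d}$'' as the rate of the kernel sum breaks down in the critical case $q(d-\alpha)=d$: there $\sum_x G^q(x)\asymp\sum_x\va{x}^{-d}=+\infty$, so the pointwise Green function bound cannot even show $\rho(q)<\infty$ (the paper needs the Varopoulos--Sobolev argument of Proposition \ref{kappafini} for this), let alone control the moment expansion. Finally, the reduction for non-integer $q$ via a ``finite net'' of dual functions is asserted, not argued. As written, the upper bound is a program, not a proof.
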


\paragraph{Sketch of the proof}$ $

The proof of the lower bound of large deviations (Theorem \ref{tlb}) is classical. Let $\mathcal{F}$ be the set of the probability measures on $\mathbb{Z}^d$ endowed by the weak topology of probability measures. Donsker and Varadhan have proved a restricted large deviation principle for $\frac{l_T}{T}$ in $\mathcal{F}$ with rate function $\mathcal{J}(\nu)=<\sqrt{\nu},-A\sqrt{\nu}>$. Then the lower bound of the large deviations with constant $\kappa(q)$ follows from the lower semicontinuity of the function   
$$\nu \in \FF \mapsto \nor{\nu}_q
=\sup\limits_{f; \nor{f}_{q'}=1} \acc{\sum\limits_{x\in\mathbb{Z}^d} \nu(x) f(x)}.$$ 

However the large deviations principle for $\frac{l_T}{T}$ being restricted, that is the upper bound is only true on compact sets, we cannot use it for the upper bound. The method used here for the upper bound has been recently developed by Castell in \cite{Castell}. The main idea is to use Eisenbaum's theorem to shift the problem from a symmetric Markov process to a Gaussian process, which is considerably more convenient. Indeed, this theorem relates the law of the local times of a symmetric Markov process stopped at an exponential time with the square of a Gaussian process, whose covariance is given by the Green kernel of the stopped Markov process.

First we compare the SILT of the random walk with the SILT of the random walk projected on the torus, and stopped at an exponential time of parameter $\lambda$ independent of the walk (lemma \ref{majoesp}). Then we apply Eisenbaum's theorem (theorem \ref{eisenbaum.theo}) to arrive at the Gaussian process $(Z_x,x\in\T_R)$ whose covariance is given by $G_{R,\lambda}(x,y)=E_x \cro{\int_0^\tau \delta_y(X^{(R)}_s) \, ds}$ (lemma \ref{firstbornesup} and \ref{ldfz.lem}).  In lemmas \ref{firstbornesup} and \ref{ldfz.lem} we work on the Gaussian process $(Z_x,x\in\T_R)$ using concentration inequalites for norms of Gaussian processes. We let space and time going together to infinity to obtain a first upper bound with a constant $-1/\rho_1$.

We finish the proof of the upper bound by proving in proposition \ref{rho1} that $\rho_1\leq \rho(q)$. The estimates of the transition probability of an $\alpha$-stable random walk obtained by Bass and Levin in \cite{BassLevin} are a key of its proof. We assume assumption 1 because Bass and Levin need it to obtain these estimates. The upper bound in this assumption is not surprising since the increments of the walk have moments of order $\alpha$. However, the lower bound is less natural since it imposes the walk to jump of arbitrary distance in $\mathbb{Z}^d$. Current results concerning estimates of transition probabilities for $\alpha$-stable processes require this kind of assumption. We think that this assumption is not necessary to obtain large deviations of the SILT, and it would be interesting to do without it. 

 Letting $R$ and $T$ go to infinity  together ask the question of scale between $\lambda$, $R$ and $T$. As we stop the random walk at an exponential time $\tau$ of parameter $\lambda$, we must control the quantity $\frac{1}{b_T}\log P(\tau \geq T)=\frac{\lambda T}{b_T}$. That's why we define $\lambda$ as $a\frac{b_T}{T}$.
Additionally, the Eisenbaum's theorem shift the problem from the $l_q$-norm of the local time $l_T$ to the $l_{2q,R}$-norm of the Gaussian process $(Z_x,x\in\T_R)$. Since $\nor{Z}_{2q,R}^2\sim R^{d/q}$ we have the extra constraint  $b_T\geq R^{d/q}$. Those two precedent conditions, combined with the condition $\lambda R^{d/q'}\gg 1$ coming from proposition \ref{rho1}, imply that $b_T^q\gg T$. That's why the proof does not work at the scale of the mean $b_T\sim T^{1/q}$.

 Next, we have to equalize the lower and upper bound, which is equivalent to prove $\kappa(q)=1/\rho(q)$. This is done in proposition \ref{kappa} where we use some techniques of Chen and M\"orters from \cite{ChenMorters}.

Finally it remains to prove that our constants $\kappa(q)$ and $\rho(q)$ are not degenerate, which is done in proposition \ref{kappafini}. We want to point out that in the supercritical case it is not difficult to prove that $\rho(q)$ is finite. Indeed, from the results of Le Gall and Rosen \cite{LeGallRosen}, we know that
$G(0,x)=O(\vert x\vert ^{\alpha -d})$, which implies that $\nor{G}_q$ is finite in the supercritical case $q(d-\alpha)>d$. These estimates cannot answer the question when $q(d-\alpha)=d$. So we had to work on $\kappa(q)$ and the underlying Sobolev's inequalities. The solution comes on one hand, from a work of Varopoulos \cite{Varopoulos} which relates Sobolev's inequalities and estimates of the probability transition, and on the other hand,  from estimates of the probability transition obtained thanks to the work of Bass and Levin \cite{BassLevin}.

This article is organized as follows. Section 2 is devoted to the proofs of two preliminary lemmas, giving some informations on the Green function. We prove a  first upper bound in section 3 and give in section 4 the demonstration of the lower bound. Finally in section 5 we end the proof of the upper bound by proving that the constant is not degenerate and equalizing the bounds.

\section{Around the Green function}
In this section we prove some preliminary results about the Green function which will be used throughout this article.

Set $G_{R,\lambda}$ the Green function of the walk $(X_t,t\geq 0)$ projected on the torus $\T_R$ and stopped at an exponential time $\tau$ of parameter $\lambda$ independent of the random walk. We use the same notation $x$ for $x\in\T_R$ and for its representant in $[0,R[^d$.

\begin{lemma}
\label{ineqgreen}
Under assumption 1, there exists a constant $C$ such that $\forall \lambda,R>0$: 
\begin{equation}
G_{R,\lambda}(x,y)\leq G(x,y)+\frac{C}{\lambda R^d}.
\end{equation}
\end{lemma}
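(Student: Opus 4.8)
The plan is to compare the walk on the torus $\T_R$ (stopped at an independent exponential time $\tau$ of parameter $\lambda$) with the walk on $\Z^d$. First I would write $G_{R,\lambda}(x,y)$ as the expected occupation time of $y$ by the projected walk before time $\tau$. Since $\tau$ is exponential of parameter $\lambda$ and independent of $X^{(R)}$, Fubini gives
$$G_{R,\lambda}(x,y)=E_x\cro{\int_0^\tau \delta_y(X^{(R)}_s)\,ds}=\int_0^\infty e^{-\lambda t}\,p^{(R)}_t(x,y)\,dt,$$
where $p^{(R)}_t(x,y)=\sum_{k\in\Z^d} p_t(x,y+kR)$ is the transition kernel on the torus, obtained by wrapping the $\Z^d$-kernel $p_t$. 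The point is that the walk on $\T_R$ is literally the image of the walk on $\Z^d$ under the projection, so this identity is exact and reduces everything to an estimate on $p_t$ summed over the lattice $R\Z^d$.

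Next I would split the sum over the lattice cosets into the diagonal term $k=0$ and the rest. The $k=0$ term contributes $\int_0^\infty e^{-\lambda t} p_t(x,y)\,dt = G_\lambda(x,y)\le G(x,y)$, since inserting the factor $e^{-\lambda t}\le 1$ only decreases the integral and $G(x,y)=\int_0^\infty p_t(x,y)\,dt$ is finite by transience ($\alpha<d$). For the remaining terms I need to bound $\sum_{k\ne 0}\int_0^\infty e^{-\lambda t} p_t(x,y+kR)\,dt$. The crude bound $p_t(x,y+kR)\le 1$ is too weak for small $t$; instead I would use that $\int_0^\infty p_t(x,z)\,dt = G(x,z)$ together with the Green function decay $G(0,z)=O(|z|^{\alpha-d})$ (from Le Gall--Rosen, quoted later in the paper). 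Actually the cleanest route: for each fixed $k\ne 0$, $\int_0^\infty e^{-\lambda t}p_t(x,y+kR)\,dt \le G(x,y+kR)\le C|kR|^{\alpha-d}$, and summing over $k\in\Z^d\setminus\{0\}$ gives $C R^{\alpha-d}\sum_{k\ne 0}|k|^{\alpha-d}$ — but this sum diverges since $\alpha-d>-d$ is not enough for summability in $d$ dimensions. So the decay-of-$G$ approach alone does not suffice and one must genuinely use the $e^{-\lambda t}$ cutoff.

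The right approach is therefore to bound $p_t(x,y+kR)$ by a heat-kernel-type estimate valid uniformly — e.g. $p_t(x,z)\le C\min(1, t|z|^{-d-\alpha})$, or more simply use $\sum_{k\ne 0} p_t(x,y+kR)\le \sum_{z\in\Z^d} p_t(x,z)\ind_{|z-x|\ge R/2} \le C\, t\, R^{-\alpha}$ for the relevant range, which comes from the fact that the walk needs time $\gtrsim R^\alpha$ to travel distance $R$ (tail estimate for $\alpha$-stable increments, or a maximal inequality). Then $\sum_{k\ne 0}\int_0^\infty e^{-\lambda t}\,p_t(x,y+kR)\,dt \le C R^{-\alpha}\int_0^\infty e^{-\lambda t}\,t\,dt = C R^{-\alpha}\lambda^{-2}$. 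This is not quite the claimed bound $\frac{C}{\lambda R^d}$; to get that form one instead bounds $\sum_{k\ne0}p^{(R)}_t(x,y+kR)\le \frac{C}{R^d}$ uniformly in $t$ (the off-diagonal mass on the torus is at most the total mass $1$ spread over $\sim R^d$ sites, or more precisely one uses that $p^{(R)}_t$ tends to the uniform measure $R^{-d}$ and is dominated by $C R^{-d}$ away from the diagonal), and then $\int_0^\infty e^{-\lambda t}\frac{C}{R^d}\,dt=\frac{C}{\lambda R^d}$. I would carry out this last route: establish the uniform bound $p^{(R)}_t(x,y)\le p_t(x,y)+\frac{C}{R^d}$ for all $t$ (the diagonal $\Z^d$ part plus the equidistributed tail), multiply by $e^{-\lambda t}$ and integrate. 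The main obstacle is proving the uniform-in-$t$ off-diagonal bound $\sum_{k\ne 0}p_t(x,y+kR)\le \frac{C}{R^d}$; for small $t$ this needs the $\alpha$-stable tail/heat-kernel bounds (available under Assumption 1 via Bass--Levin) applied carefully over the far cosets, while for large $t$ it follows from convergence to equilibrium on the torus.
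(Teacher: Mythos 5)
Your final route coincides in substance with the paper's proof: write $G_{R,\lambda}(x,y)=\int_0^\infty e^{-\lambda t}p_t^R(x,y)\,dt$ with $p_t^R(x,y)=\sum_{k}p_t(x,y+kR)$, keep the $k=0$ term (which integrates to at most $G(x,y)$ since $e^{-\lambda t}\le 1$ and the walk is transient), and show the non-zero cosets contribute $O(R^{-d})$ before integrating the factor $e^{-\lambda t}$, which yields $C/(\lambda R^d)$. The step you leave open as ``the main obstacle'' --- the uniform-in-$t$ bound $\sum_{k\neq 0}p_t(x,y+kR)\le CR^{-d}$ --- is exactly what the Bass--Levin estimate $p_t(u,v)\le C\,(t^{-d/\alpha}\wedge t|u-v|^{-d-\alpha})$ delivers once you split the cosets at $|k|=t^{1/\alpha}/R$: the near cosets number at most $C(t^{1/\alpha}/R)^d$ and each contributes at most $Ct^{-d/\alpha}$, giving $CR^{-d}$, while the far cosets contribute at most $CtR^{-d-\alpha}\sum_{|k|>t^{1/\alpha}/R}|k|^{-d-\alpha}\le CtR^{-d-\alpha}\cdot R^{\alpha}/t=CR^{-d}$ (and for $t\le R^\alpha$ the full sum $\sum_{k\neq 0}|k|^{-d-\alpha}<\infty$ already gives $CtR^{-d-\alpha}\le CR^{-d}$). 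The paper performs the same split after integrating in $t$, cutting the integral at $LR^\alpha$; your uniform-in-$t$ organization is equivalent and marginally cleaner. So there is no missing idea, only a computation you located correctly but did not carry out; your two discarded routes (summing $G(x,y+kR)$, and the $\lambda^{-2}R^{-\alpha}$ bound) are rightly discarded.

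One caution: two of your intermediate assertions are literally false because of wrap-around representatives, namely $\sum_{k\neq 0}p_t(x,y+kR)\le\sum_{z}p_t(x,z)\ind_{|z-x|\ge R/2}$ and the claim that $p_t^R$ is ``dominated by $CR^{-d}$ away from the diagonal.'' With $x=0$ and $y=(R-1,0,\dots,0)$ the coset $k=-e_1$ places $y+kR$ at distance $1$ from $x$, so that single term is of order $1$ for $t$ of order $1$. The correct statement is $p_t^R(x,y)\le p_t(x,\tilde y)+CR^{-d}$ with $\tilde y$ the representative of $y$ minimizing $|x-\tilde y|$ (the paper's bound $t/|x-y-Rz|^{d+\alpha}\le t/(R|z|)^{d+\alpha}$ has the same caveat for small $|z|$). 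Since you do not rely on those two assertions in the route you ultimately propose, this does not invalidate your plan, but a written-up proof should phrase the off-diagonal bound in terms of non-zero cosets of the minimizing representative rather than in terms of distance from the diagonal.
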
 

\begin{proof}

Let $p^R_t(x,y)$ be the transition probability of $X^{(R)}_t$ the random walk $X_t$ projected on the torus $\T_R$, hence
\begin{equation*}
G_{R,\lambda}(x,y) = \mathbb{E}_x \cro{\int_0^\tau \ind_{X_s^R =y}ds}
 					=\int_0^{+\infty} \exp(-\lambda t) p^R_t(x,y) dt. 
\end{equation*}

By Theorem 1.1 in \cite{BassLevin}, there exists a constant $C$ such that
\begin{equation*}
\forall t\geq 0,\forall x,y\in\mathbb{Z}^d,p_t(x,y)\leq C\sum\limits_{z\in\mathbb{Z}^d}\pare{t^{-d/\alpha}\wedge \frac{t}{\vert x-y\vert^{d+\alpha}}}.
\end{equation*}
Using the change of variable $\xi =z-\frac{x-y}{R}$ we have:
\begin{align*}
 p_t^R(x,y)&=\sum\limits_{z\in \mathbb{Z}^d} p_t(x,y+Rz)\\
 &\leq p_t(x,y)+C\sum\limits_{\underset{\vert z\vert\leq \frac{t^{1/\alpha}}{R}}{z\neq 0}}\frac{1}{t^{d/\alpha}} +C\sum\limits_{\vert z\vert > \frac{t^{1/\alpha}}{R}}\frac{t}{(R\vert z\vert)^{d+\alpha}}\\
& \leq p_t(x,y)+\frac{C}{R^d}+C\sum\limits_{\vert z\vert > \frac{t^{1/\alpha}}{R}}\frac{t}{(R\vert z\vert)^{d+\alpha}}.
 \end{align*}
 Consequently for $L>1$ we have:
\begin{align}
 \nonumber G_{R,\lambda}(x,y)
 \leq &\int_0^{+\infty}\exp(-\lambda t)p_t(x,y)dt+\int_0^{+\infty} \exp(-\lambda t)\pare{\frac{C}{R^d}+C\sum\limits_{\vert z\vert > \frac{t^{1/\alpha}}{R}}\frac{t}{(R\vert z\vert)^{d+\alpha}}}dt\\
 \nonumber \leq &G(x,y)+\frac{C}{\lambda R^d} +C \int_0^{+\infty}\frac{t \exp(-\lambda t)}{R^{d+\alpha}} \sum\limits_{\vert z\vert > \frac{t^{1/\alpha}}{R}}\frac{1}{\vert z\vert^{d+\alpha}}dt\\
\nonumber =&G(x,y)+\frac{C}{\lambda R^d}+C\int_0^{LR^\alpha}\frac{t \exp(-\lambda t)}{R^{d+\alpha}} \sum\limits_{\vert z\vert > \frac{t^{1/\alpha}}{R}}\frac{1}{\vert z\vert^{d+\alpha}}dt\\
\label{aa} &+C\int_{LR^\alpha}^{+\infty}\frac{t \exp(-\lambda t)}{R^{d+\alpha}} \sum\limits_{\vert z\vert > \frac{t^{1/\alpha}}{R}}\frac{1}{\vert z\vert^{d+\alpha}}dt.
 \end{align} 
Let us find an upper bound for the first integral in (\ref{aa}). Using the fact that the function $x\rightarrow \frac{1-\exp(-x)}{x}$ is bounded on $\mathbb{R}^+$ we obtain:
\begin{align}
\nonumber
\int_0^{LR^\alpha}\frac{t \exp(-\lambda t)}{R^{d+\alpha}} \sum\limits_{\vert z\vert > \frac{t^{1/\alpha}}{R}}\frac{1}{\vert z\vert^{d+\alpha}}dt 
&\leq \sum\limits_{\vert z\vert > 0}\frac{1}{\vert z\vert^{d+\alpha}}\int_0^{LR^\alpha}\frac{t \exp(-\lambda t)}{R^{d+\alpha}} dt\\
 &\leq C \frac{1-\exp(-\lambda LR^\alpha)}{\lambda^2 R^{d+\alpha}}
\label{cc}\leq \frac{C}{\lambda R^d}.
\end{align}
We work now on the second integral in (\ref{aa}):
\begin{align*}
\int_{LR^\alpha}^{+\infty}\frac{t \exp(-\lambda t)}{R^{d+\alpha}} \sum\limits_{\vert z\vert > \frac{t^{1/\alpha}}{R}}\frac{1}{\vert z\vert^{d+\alpha}}dt
&\leq C\int_{LR^\alpha}^{+\infty}\frac{t \exp(-\lambda t)}{R^{d+\alpha}} \sum\limits_{k > \frac{t^{1/\alpha}}{R}}\frac{1}{k^{1+\alpha}}dt\\
&\leq C\int_{LR^\alpha}^{+\infty}\frac{t \exp(-\lambda t)}{R^{d+\alpha}}
\frac{1}{\pare{\frac{t^{1/\alpha}}{R}-1}^\alpha}dt\\
&= C\int_{LR^\alpha}^{+\infty} \frac{t\exp(-\lambda t)}{R^{d}}\frac{1}{\pare{t^{1/\alpha}-R}^\alpha}dt.
\end{align*}
Since $t\geq LR^\alpha$, we have $t^{1/\alpha}-R\geq t^{1/\alpha}(1-L^{-1/\alpha})$, then:
\begin{align}
\nonumber \int_{LR^\alpha}^{+\infty}\frac{t \exp(-\lambda t)}{R^{d+\alpha}} \sum\limits_{\vert z\vert > \frac{t^{1/\alpha}}{R}}\frac{1}{\vert z\vert^{d+\alpha}}dt
&\leq C\int_{LR^\alpha}^{+\infty} \frac{\exp(-\lambda t)}{R^d(1-L^{-1/\alpha})^\alpha}dt\\
\label{cd}&\leq C\frac{\exp(-L\lambda R^\alpha)}{\lambda R^d (1-L^{-1/\alpha})^\alpha}
\leq \frac{C}{\lambda R^d}.
\end{align}
Gathering (\ref{aa}),(\ref{cc}) and (\ref{cd}) we obtain:
\begin{align*}
G_{R,\lambda}(x,y)\leq G(x,y) + \frac{C}{\lambda R^d}.
\end{align*}
\end{proof}

\begin{lemma}
\label{green}
Assume that $\lambda$ and $R$ depend on $T$ in such a way that $\lambda \ll 1$ and $\lambda R^d \gg 1$. Under assumption 1, we have:
\begin{equation}
\lim\limits_{T\rightarrow +\infty}G_{R,\lambda}(0,0) =G(0,0).
\end{equation}
\end{lemma}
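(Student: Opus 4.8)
The plan is to sandwich $G_{R,\lambda}(0,0)$ between $G(0,0)$ plus a vanishing error term and a quantity that converges to $G(0,0)$ from below, so that both hypotheses $\lambda\ll 1$ and $\lambda R^d\gg 1$ get used, one for each bound. Note first that the Green function exists and $G(0,0)=\int_0^{+\infty}p_t(0,0)\,dt<+\infty$, since $\alpha<d$ forces the walk to be transient (the Bass--Levin estimate gives $p_t(0,0)=O(t^{-d/\alpha})$, which is integrable at infinity precisely when $d>\alpha$, and $p_t(0,0)$ stays bounded near $t=0$).

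For the upper bound, I would simply apply Lemma \ref{ineqgreen} with $x=y=0$, which gives
\[
G_{R,\lambda}(0,0)\leq G(0,0)+\frac{C}{\lambda R^d}.
\]
Since $\lambda R^d\gg 1$, the error term $C/(\lambda R^d)$ tends to $0$ as $T\to+\infty$, so $\limsup_{T\to+\infty}G_{R,\lambda}(0,0)\leq G(0,0)$. For the lower bound I would start from the representation $G_{R,\lambda}(0,0)=\int_0^{+\infty}e^{-\lambda t}\,p_t^R(0,0)\,dt$ already recorded in the proof of Lemma \ref{ineqgreen}, together with the elementary inequality $p_t^R(0,0)=\sum_{z\in\mathbb{Z}^d}p_t(0,Rz)\geq p_t(0,0)$, to obtain $G_{R,\lambda}(0,0)\geq\int_0^{+\infty}e^{-\lambda t}p_t(0,0)\,dt$. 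As $\lambda\ll 1$, i.e. $\lambda\to 0$, the integrand increases monotonically to $p_t(0,0)$, so by the monotone convergence theorem the right-hand side converges to $\int_0^{+\infty}p_t(0,0)\,dt=G(0,0)$, whence $\liminf_{T\to+\infty}G_{R,\lambda}(0,0)\geq G(0,0)$. Combining the two estimates gives $\lim_{T\to+\infty}G_{R,\lambda}(0,0)=G(0,0)$.

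There is no real obstacle here: the argument is a soft sandwich, with the quantitative input entirely packaged into Lemma \ref{ineqgreen}. The only point worth stressing is that the two scaling hypotheses are used separately and are mutually compatible (for instance $\lambda=T^{-1}$, $R=T$ satisfies both), and that transience ($\alpha<d$) is what makes $G(0,0)$ finite so that the monotone convergence step has a finite limit rather than a divergence.
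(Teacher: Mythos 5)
Your proof is correct. The upper bound is exactly the paper's: apply Lemma \ref{ineqgreen} at $x=y=0$ and use $\lambda R^d\gg 1$. For the lower bound you take a slightly different, softer route. The paper also starts from $p^R_t(0,0)\geq p_t(0,0)$, but instead of invoking monotone (or dominated) convergence on the full Laplace transform $\int_0^{+\infty}e^{-\lambda t}p_t(0,0)\,dt$ as $\lambda\to 0$, it truncates the time integral at an auxiliary scale $S$, bounds the error term using $p^R_t(0,0)\leq 1$ via $\int_0^S(1-e^{-\lambda t})\,dt=O(\lambda S^2)$, and then chooses $S$ with $S\gg 1$ and $\lambda S^2\ll 1$ (possible since $\lambda\to 0$). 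Your version avoids introducing $S$ altogether and is cleaner; the paper's version is more elementary and fully quantitative. One small point worth making explicit in your argument: $\lambda=\lambda(T)$ need not be monotone in $T$, so rather than literally applying the monotone convergence theorem along $T\to\infty$, you should observe that $\lambda\mapsto\int_0^{+\infty}e^{-\lambda t}p_t(0,0)\,dt$ is nonincreasing and tends to $G(0,0)$ as $\lambda\downarrow 0$ (by monotone convergence along any sequence $\lambda_k\downarrow 0$, or by dominated convergence with dominating function $p_t(0,0)\in L^1$, which is where transience enters), and then use $\lambda(T)\to 0$. This is a cosmetic fix, not a gap.
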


\begin{proof}
On one hand, by lemma \ref{ineqgreen} there exists a constant $C$ such that $\forall \lambda,R>0$, $G_{R,\lambda}(0,0)\leq G(0,0)+\frac{C}{\lambda R^d}$. Hence with $\lambda R^d\gg 1$ we have: 
\begin{equation*}
\limsup\limits_{T\rightarrow +\infty}G_{R,\lambda}(0,0) \leq G(0,0).
\end{equation*}
On the other hand let $S>0$. Using the fact that $1\geq p^R_t(0,0) \geq p_t(0,0)$ and $\exp(-\lambda t)\leq 1$ for $t\geq 0$, we deduce:
\begin{align*} 
\nonumber\int_0^{S} \exp(-\lambda t) p^R_t(0,0) \, dt 
&= \int_0^S p_t^R(0,0)+(\exp (-\lambda t)-1)p_t^R(0,0)dt\\
& \nonumber\geq \int_0^{S}  p_t(0,0) \, dt  -  \int_0^{S} (1-\exp(-\lambda t)) \, dt 
\\
& = \label{s} \int_0^{S}  p_t(0,0) \, dt  + \frac{\exp(-\lambda S)-1+\lambda S}{\lambda}.
\end{align*} 
If $S$ is chosen so that 
$S \gg 1$ and 
$\frac{1}{\lambda}\pare{\exp(-\lambda S)-1+\lambda S}\ll 1$,
then we have:
\begin{equation}
\label{u}
\liminf_{T \rightarrow \infty} G_{R,\lambda}(0,0)\geq\liminf_{T \rightarrow \infty}  \int_0^{S} \exp(-\lambda t) p^R_t(0,0) \, dt
\geq \int_0^{\infty} p_t(0,0) \, dt := G(0,0) . 
\end{equation}

Using Taylor series conditions, $S\gg 1$ and $\lambda S^2 \ll 1$
are sufficient. These conditions are compatible because $\lambda\rightarrow 0$. So, for a such choice of $S$, we have:
\[ \liminf_{T \rightarrow \infty} G_{R,\lambda}(0,0) 
\geq  G(0,0).
\]
\end{proof}

\section{Upper bound}
In this section we obtain a first upper bound for the large deviations of $I_T$ which is given in theorem \ref{fpgd}.

\begin{theo}
\label{fpgd} 
Assume that $q(d-\alpha)\geq d$ and that we are under assumption 1.
For all $a > 0$ we define the parameter $\lambda$ of the exponential time $\tau$ by $\lambda=a\frac{b_T}{T}$.
Moreover, assume that $\lambda$, $R$ and $b_T$ depend on $T$ in such a way that $\lambda R^d\gg 1$, $b_T\gg R^{d/q}$ and $\log(T) \ll b_T \ll T$. Then we define
\[
\rho_1(a) = \limsup_{T \rightarrow \infty} \rho_1(a,R,T)
\ and\ \rho_1 = \limsup_{a \rightarrow 0} \rho_1(a) , 
\]
where $\rho_1(a,R,T):= \sup
		\acc{\sum\limits_{x,y\in \T_R} f(x) G_{R,\lambda}(x,y) f(y) \,
; \, f \text{ such that }  \nor{f}_{(2q)',R}=1}$, and we have:
$$\limsup\limits_{T\rightarrow +\infty}\frac{1}{b_T} \log P\cro{I_T\geq b_T^q}\leq -\frac{1}{\rho_1}.$$
\end{theo}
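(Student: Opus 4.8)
\textbf{Proof strategy for Theorem~\ref{fpgd}.} The plan is to realise, in three stages, the localisation strategy sketched in the introduction. \emph{Stage~1 (reduction).} First one compares the self-intersection of $X$ with that of the walk $X^{(R)}$ projected on $\T_R$ and killed at the independent exponential time $\tau$ of parameter $\lambda=ab_T/T$. Writing $l_\tau^{(R)}$ for the local time of $X^{(R)}$ and $I_\tau^{(R)}=\sum_{x\in\T_R}l_\tau^{(R)}(x)^q=\nor{l_\tau^{(R)}}_{q,R}^q$, note that projecting on the torus only sums the local times of the various representatives of a point, so (since $q>1$ here) $\sum_{x\in\T_R}l_T^{(R)}(x)^q\geq\sum_{x\in\Z^d}l_T(x)^q=I_T$, while on $\{\tau\geq T\}$ the local time has only grown; hence $\{I_T\geq b_T^q\}\cap\{\tau\geq T\}\subseteq\{I_\tau^{(R)}\geq b_T^q\}$. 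By independence of $\tau$ and the walk, and since $\lambda T=ab_T$,
\[
P\cro{I_T\geq b_T^q}\leq e^{\lambda T}\,P\cro{\nor{l_\tau^{(R)}}_{q,R}\geq b_T},\qquad\text{i.e.}\qquad \frac{1}{b_T}\log P\cro{I_T\geq b_T^q}\leq a+\frac{1}{b_T}\log P\cro{\nor{l_\tau^{(R)}}_{q,R}\geq b_T}.
\]
(This is the content of lemma~\ref{majoesp}.)

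\emph{Stage~2 (Eisenbaum's isomorphism).} Let $(Z_x)_{x\in\T_R}$ be the centred Gaussian field with covariance $G_{R,\lambda}$, independent of the walk; it is well defined since $\T_R$ is finite and, by lemma~\ref{ineqgreen}, $G_{R,\lambda}(x,x)\leq G(0,0)+C/(\lambda R^d)$ is finite and, as $\lambda R^d\gg1$ and $\alpha<d$ makes the walk transient, bounded by a constant. Eisenbaum's theorem (theorem~\ref{eisenbaum.theo}) asserts that for every $s\neq0$ the law of $\pare{l_\tau^{(R)}(x)+\tfrac12 Z_x^2}_{x\in\T_R}$ under the signed measure $\pare{1+Z_0/s}\,dP_0\otimes dP_Z$ equals the law of $\pare{\tfrac12(Z_x+s)^2}_{x\in\T_R}$ under $dP_Z$. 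Put $B=\{\nor{l_\tau^{(R)}+\tfrac12 Z^2}_{q,R}\geq b_T\}$, where $Z^2$ and $Z+s$ denote the fields $(Z_x^2)_x$ and $(Z_x+s)_x$. Since $\sum_x\pare{\tfrac12(Z_x+s)^2}^q=2^{-q}\nor{Z+s}_{2q,R}^{2q}$, applying the isomorphism to $\ind_B$ gives $E_0\otimes E_Z\cro{\pare{1+Z_0/s}\ind_B}=P\cro{\nor{Z+s}_{2q,R}^2\geq 2b_T}$. On the other hand $B$ depends on $Z$ only through $(Z_x^2)_x$, hence is invariant under the sign flip $Z\mapsto-Z$; this flip preserves the law of $Z$ and does not affect $l_\tau^{(R)}$, so $E_Z\cro{Z_0\ind_B\mid\text{walk}}=0$ and the left-hand side is just $P[B]$. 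Finally $l_\tau^{(R)}(x)\leq l_\tau^{(R)}(x)+\tfrac12 Z_x^2$ gives $\{\nor{l_\tau^{(R)}}_{q,R}\geq b_T\}\subseteq B$, whence, viewing $l_\tau^{(R)}$ and $Z$ jointly under $P_0\otimes P_Z$,
\[
P\cro{\nor{l_\tau^{(R)}}_{q,R}\geq b_T}\leq P[B]=P\cro{\nor{Z+s}_{2q,R}^2\geq 2b_T}.
\]
One may take $s=1$; this is lemma~\ref{firstbornesup}.

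\emph{Stage~3 (Gaussian concentration and limits).} The map $z\mapsto\nor{z}_{2q,R}$ on $\R^{\T_R}$ is the supremum of the linear forms $z\mapsto\sum_x f(x)z(x)$ over $\nor{f}_{(2q)',R}\leq1$, so by a Gaussian concentration inequality $\nor{Z}_{2q,R}$ concentrates around its mean with variance proxy
\[
\sigma^2=\sup_{\nor{f}_{(2q)',R}\leq1}\text{Var}\pare{\sum_{x}f(x)Z_x}=\sup_{\nor{f}_{(2q)',R}\leq1}\sum_{x,y\in\T_R}f(x)G_{R,\lambda}(x,y)f(y)=\rho_1(a,R,T),
\]
exactly the constant in the statement. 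Moreover, since $E[Z_x^{2q}]$ is $G_{R,\lambda}(x,x)^q$ times a constant depending only on $q$, lemma~\ref{ineqgreen} and $\card(\T_R)=R^d$ give $E\nor{Z}_{2q,R}\leq\pare{\sum_{x\in\T_R}E[Z_x^{2q}]}^{1/(2q)}\leq C R^{d/(2q)}$; as $b_T\gg R^{d/q}$, this mean — and likewise the shift $sR^{d/(2q)}$ — is $o(\sqrt{b_T})$. Hence $\nor{Z+s}_{2q,R}\leq\nor{Z}_{2q,R}+sR^{d/(2q)}$ and, using also lemma~\ref{green},
\[
P\cro{\nor{Z+s}_{2q,R}^2\geq 2b_T}\leq P\cro{\nor{Z}_{2q,R}\geq\sqrt{2b_T}-sR^{d/(2q)}}\leq\exp\pare{-\frac{b_T(1-o(1))}{\rho_1(a,R,T)}}.
\]
Combined with Stage~1 this gives $\frac{1}{b_T}\log P\cro{I_T\geq b_T^q}\leq a-\frac{1-o(1)}{\rho_1(a,R,T)}$; letting $T\to\infty$ yields $\limsup_T\frac{1}{b_T}\log P[I_T\geq b_T^q]\leq a-1/\rho_1(a)$ for every $a>0$, and since the left-hand side does not depend on $a$, letting $a\to0$ along a sequence realising $\rho_1=\limsup_{a\to0}\rho_1(a)$ gives the claim (the case $\rho_1=+\infty$ being trivial as $\log P\leq0$). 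Lemmas~\ref{firstbornesup} and~\ref{ldfz.lem} package Stages~2 and~3.

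The crux — and the only genuinely subtle point — is the passage to the Gaussian field in Stage~2: Eisenbaum's isomorphism is merely an identity of signed measures, and the argument works only because the relevant event $B$ is an \emph{even} function of the field, so that the extra Radon--Nikodym weight $Z_0/s$ integrates to zero and no error term survives (in particular one need not optimise over $s$). Once this is in place, the role of the scaling hypotheses is transparent: $\lambda R^d\gg1$ keeps the diagonal of $G_{R,\lambda}$ bounded (lemma~\ref{ineqgreen}), so that $E\nor{Z}_{2q,R}=O(R^{d/(2q)})$; $b_T\gg R^{d/q}$ makes this mean (and the shift $s$) negligible against $\sqrt{b_T}$, which is what isolates the constant $\rho_1(a,R,T)$; and $\log T\ll b_T$ absorbs the subexponential prefactors entering lemma~\ref{majoesp}. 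It is precisely the joint incompatibility of these constraints near $b_T\sim T^{1/q}$ that stops the method from reaching the scale of the mean.
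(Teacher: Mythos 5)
Your Stage~2 rests on a mis-stated isomorphism theorem, and this is exactly the ``genuinely subtle point'' you yourself flag as the crux. Eisenbaum's theorem, as cited in Marcus--Rosen (and as stated in Theorem~\ref{eisenbaum.theo}), reads
\[
E_0\otimes E_Z\Bigl[F\bigl((l_{R,\tau}(x)+\tfrac12(Z_x+s)^2)_x\bigr)\Bigr]
= E_Z\Bigl[F\bigl((\tfrac12(Z_x+s)^2)_x\bigr)\bigl(1+\tfrac{Z_0}{s}\bigr)\Bigr],
\]
with the shift $+s$ on \emph{both} sides and the Radon--Nikodym weight attached to the purely Gaussian side. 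You instead write $l_{\tau}^{(R)}+\tfrac12 Z^2$ (no shift) on the left and move the weight $(1+Z_0/s)$ there. That version is false: taking $F(z)=z_y$, your identity would force $G_{R,\lambda}(0,y)+\tfrac12 G_{R,\lambda}(y,y)=\tfrac12\bigl(G_{R,\lambda}(y,y)+s^2\bigr)$, i.e. $G_{R,\lambda}(0,y)=s^2/2$ for every $y$, which is absurd. Your sign-flip observation --- that $B$ depends on $Z$ only through $(Z_x^2)_x$ and hence the $Z_0/s$ term integrates to zero --- is precisely what your modified statement is engineered to allow, but with the correct theorem the relevant set involves $(Z_x+s)^2$ (not $Z_x^2$) and is not invariant under $Z\mapsto-Z$; worse, the weight sits on the right-hand side, so there is nothing to kill. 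The too-clean conclusion $P[\nor{l_\tau^{(R)}}_{q,R}\geq b_T]\leq P[\nor{Z+s}_{2q,R}^2\geq 2b_T]$ with $s=1$ and no exponential cost cannot be right.

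The paper handles this honestly and at a price. In Lemma~\ref{firstbornesup} it keeps the weight $(1+Z_0/s)$, applies H\"older to split it off (controlled by $\text{Var}(Z_0)\leq 1/\lambda$), uses $(a+b)^2\leq(1+\epsilon)a^2+(1+\tfrac{1}{\epsilon})b^2$ to absorb the shift $s$, and --- crucially --- divides by a lower bound for $P[\nor{Z}_{2q,R}\geq\sqrt{2b_T\epsilon}]$ rather than directly upper-bounding $P[\nor{Z+s}^2_{2q,R}\geq 2b_T]$. This produces the free parameters $\gamma,\epsilon$ and the optimised choice $s=\epsilon\sqrt{2b_T\epsilon}/R^{d/2q}$; the $\epsilon$-losses are then sent to zero in the proof of Theorem~\ref{fpgd} at the end, after invoking Lemma~\ref{green} to make $G_{R,\lambda}(0,0)\to G(0,0)$. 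Your Stage~1 matches Lemma~\ref{majoesp}, and your Stage~3 reasoning (Borell--TIS concentration with $\sigma^2=\rho_1(a,R,T)$ and negligibility of the median under $b_T\gg R^{d/q}$) is sound in spirit and parallels Lemma~\ref{ldfz.lem}, but it cannot rescue the argument because its input --- the clean inequality from Stage~2 --- is unavailable.
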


The method of the proof is similar to the one developed by Castell in \cite{Castell}. We give it for the sake of completeness. 
\subsection{Step 1: comparison with the SILT of the random walk on the torus stopped at an exponential
time}

\begin{lemma}
\label{majoesp}
Let $\tau$ be the exponential time defined in theorem \ref{fpgd}.
Let $l^{(R)}_{\tau}(x)=\int_0^{\tau} 
\delta_x (X_s^{(R)}) \, ds$ and $I_{R,\tau}= \sum_{x \in \T_R} (l^{(R)}_{\tau}(x))^q$.
Then $\forall a,R,T >0$: 

\begin{equation*}
P \cro{ I_T\geq b_T^q}
\leq e^{a b_T} P \cro{ I_{R,\tau}\geq b_T^q}. 
\end{equation*}
 \end{lemma}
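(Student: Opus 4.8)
The plan is to compare the self-intersection local time $I_T$ of the random walk run up to the deterministic time $T$ with the self-intersection local time $I_{R,\tau}$ of the walk projected on the torus and run up to the independent exponential time $\tau$. The comparison proceeds in two stages: first passing from $\Z^d$ to the torus $\T_R$, then passing from a fixed time $T$ to the exponential time $\tau$. For the first stage, note that projecting onto the torus only identifies sites, so local times add up: if $\pi:\Z^d\to\T_R$ denotes the projection, then $l_T^{(R)}(\bar x)=\sum_{x\in\pi^{-1}(\bar x)}l_T(x)$ for the walk run up to time $T$. Since $q\geq 1$, superadditivity of $t\mapsto t^q$ on $\R^+$ (that is, $(\sum a_i)^q\geq\sum a_i^q$ for $a_i\geq 0$) gives $(l_T^{(R)}(\bar x))^q\geq\sum_{x\in\pi^{-1}(\bar x)}l_T(x)^q$, and summing over $\bar x\in\T_R$ yields $I_{R,T}\geq I_T$ pathwise, where $I_{R,T}=\sum_{\bar x\in\T_R}(l_T^{(R)}(\bar x))^q$ is the torus SILT run to time $T$. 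Hence $P[I_T\geq b_T^q]\leq P[I_{R,T}\geq b_T^q]$.

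For the second stage, I would condition on $\tau$, which is independent of the walk. On the event $\{\tau\geq T\}$ the torus walk has at least run up to time $T$, so $l_\tau^{(R)}(\bar x)\geq l_T^{(R)}(\bar x)$ for every $\bar x$ (local times are nondecreasing in time), whence $I_{R,\tau}\geq I_{R,T}$ on that event. Therefore
\[
P[I_{R,\tau}\geq b_T^q]\ \geq\ P\bigl[\{I_{R,\tau}\geq b_T^q\}\cap\{\tau\geq T\}\bigr]\ \geq\ P\bigl[\{I_{R,T}\geq b_T^q\}\cap\{\tau\geq T\}\bigr].
\]
Using independence of $\tau$ and the walk, the last probability factors as $P[I_{R,T}\geq b_T^q]\,P[\tau\geq T]=P[I_{R,T}\geq b_T^q]\,e^{-\lambda T}$. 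Since $\lambda=a b_T/T$, we have $e^{-\lambda T}=e^{-a b_T}$, so $P[I_{R,T}\geq b_T^q]\leq e^{a b_T}P[I_{R,\tau}\geq b_T^q]$. Combining with the first stage gives $P[I_T\geq b_T^q]\leq e^{a b_T}P[I_{R,\tau}\geq b_T^q]$, as claimed.

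The argument is essentially soft, so there is no serious obstacle; the only point requiring a little care is the pathwise inequality $I_{R,T}\geq I_T$, which relies on the elementary superadditivity of $x\mapsto x^q$ for $q\geq 1$ together with the fact that the torus projection merely groups together the contributions of sites mapping to the same torus point. One should also make sure that all objects are on the same probability space, which is immediate once one realises the torus walk is a deterministic function of the walk on $\Z^d$ and $\tau$ is built independently.
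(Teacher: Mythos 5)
Your proof is correct and follows essentially the same route as the paper: the pathwise inequality $I_T\leq I_{R,T}$ via superadditivity of $t\mapsto t^q$ (the paper phrases this as convexity, but it is the same elementary fact $(\sum a_i)^q\geq\sum a_i^q$), followed by the comparison $P[I_{R,T}\geq b_T^q]\,P[\tau\geq T]\leq P[I_{R,\tau}\geq b_T^q]$ using independence of $\tau$ and monotonicity of local times, with $P[\tau\geq T]=e^{-ab_T}$. No gaps.
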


\begin{proof}
We deduce by convexity that
\begin{align*}
I_T  &= \sum_{x \in \Z^d} l_T^q(x) 
	 = \sum_{x \in \T_R} \sum_{k \in \Z^d} l^q_T(x+kR)\\
    & \leq  \sum_{x \in \T_R} \pare{\sum_{k \in \Z^d} l_T(x+kR)}^q
      =\sum_{x \in \T_R} l^q_{R,T}(x) = I_{R,T}.
\end{align*}

Then using the fact that $\tau\sim\epsilon (\lambda)$ independent of $(X_s,s\geq 0)$ with $\lambda=a\frac{b_T}{T}$, we get:
\begin{align*}
P \cro{ I_T\geq b_T^q} \exp\pare{-a b_T}
		&\leq P \cro{ I_{R,T}\geq b_T^q} P(\tau\geq T)\\
		&\leq \mathbb{P} \cro{ I_{R,T}\geq b_T^q,\tau\geq T}\\
		&\leq P \cro{ I_{R,\tau}\geq b_T^q}.
\end{align*}
Finally, $P \cro{ I_T\geq b_T^q}
\leq e^{a b_T} P \cro{ I_{R,\tau}\geq b_T^q} $.

\end{proof}

\subsection{Step 2: the Eisenbaum isomorphism theorem}
We use here the following theorem due to Eisenbaum given by corollary 8.1.2 page 364 in \cite{MarcusRosen}.
\begin{theo} (Eisenbaum) 
\label{eisenbaum.theo}
Let $\tau$ be as in theorem \ref{fpgd} and
let $(Z_x, x \in \T_R)$ be a centered Gaussian process 
with covariance matrix $G_{R,\lambda}$ 
independent of $\tau$  and of the  random walk $(X_s, s \geq 0)$. 
For $s\neq 0$, consider the process 
$S_x := l_{R,\tau}(x) + \frac{1}{2} (Z_x+s)^2$. Then for all measurable
and bounded function $F : \R^{\T_R} \mapsto \R$: 
\begin{equation*}
E\cro{F((S_x; x\in \T_R))}
= E\cro{F\pare{(\frac{1}{2}(Z_x +s)^2;x \in \T_R)} \, \pare{1 + \frac{Z_0}{s}}}
\, . 
\end{equation*} 
\end{theo}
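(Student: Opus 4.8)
The plan is to exploit that $\T_R$ is a \emph{finite} set, so that $\R^{\T_R}$ is finite dimensional and the statement reduces to linear algebra, together with a standard reduction to exponential test functions. Write $n=\card(\T_R)$, identify $\R^{\T_R}$ with $\R^n$, and view $A^{(R)}$ as the symmetric generator matrix of $X^{(R)}$ on $\T_R$, $G_{R,\lambda}$ as the associated covariance matrix, and $\mathbf{1}$ as the vector all of whose entries equal $1$. I would first record two elementary facts: $G_{R,\lambda}=(\lambda I-A^{(R)})^{-1}$, since $G_{R,\lambda}=\int_0^{\infty}e^{-\lambda t}e^{tA^{(R)}}\,dt$; and, $X^{(R)}$ being conservative, $G_{R,\lambda}\mathbf{1}$ has all entries equal to $E_x[\tau]=1/\lambda$. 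Throughout, the walk is started at the origin $0\in\T_R$, so that $E=E_0$; this is harmless since $P[I_T\geq b_T^q]$ does not depend on the starting point.

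\emph{Step 1: reduction to exponential test functions.} Since $S_x\geq 0$ and $\tfrac12(Z_x+s)^2\geq 0$, it suffices to check the identity for $F$ supported on $[0,\infty)^n$. The left-hand side is $\int F\,d\mu$ with $\mu$ the law of $(S_x)_x$, and the right-hand side is $\int F\,d\nu$, where $\nu$ is the finite signed measure obtained by weighting the law of $(\tfrac12(Z_x+s)^2)_x$ with $1+Z_0/s$; this $\nu$ is finite because $Z_0$ is Gaussian, and has total mass $1+E[Z_0]/s=1$. As the Laplace transform separates finite signed measures on $[0,\infty)^n$, it is enough to prove $\int F_\theta\,d\mu=\int F_\theta\,d\nu$ for every $F_\theta(v)=e^{-\langle\theta,v\rangle}$ with $\theta\in[0,\infty)^n$; bounded measurable $F$ then follow.

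\emph{Step 2: computing both sides.} By independence of $(Z_x)_x$, of $\tau$, and of the walk,
$$\int F_\theta\,d\mu=W(\theta)\,A(\theta),\qquad W(\theta):=E_0\Big[e^{-\sum_x\theta_x l_{R,\tau}(x)}\Big],\quad A(\theta):=E\Big[e^{-\frac12\sum_x\theta_x(Z_x+s)^2}\Big].$$
Since $\sum_x\theta_x l_{R,\tau}(x)=\int_0^{\tau}\theta(X^{(R)}_s)\,ds$, conditioning on $\tau\sim\mathrm{Exp}(\lambda)$ and using the Feynman--Kac formula (elementary on a finite state space) gives, with $\Theta=\mathrm{diag}(\theta_x)$,
$$W(\theta)=\big(\lambda(\lambda I-A^{(R)}+\Theta)^{-1}\mathbf{1}\big)(0)=\big((I+G_{R,\lambda}\Theta)^{-1}\mathbf{1}\big)(0),$$
the second equality using $\lambda I-A^{(R)}=G_{R,\lambda}^{-1}$ and $G_{R,\lambda}\mathbf{1}=\lambda^{-1}\mathbf{1}$. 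For $A(\theta)$, completing the square in the Gaussian integral shows that under the tilted density proportional to $\exp(-\tfrac12 z^{\top}G_{R,\lambda}^{-1}z-\tfrac12\sum_x\theta_x(z_x+s)^2)$ the vector $z$ is again Gaussian, with mean $\bar z=-s(G_{R,\lambda}^{-1}+\Theta)^{-1}\Theta\mathbf{1}=-s\big(\mathbf{1}-(I+G_{R,\lambda}\Theta)^{-1}\mathbf{1}\big)$. Hence $E[Z_0\,e^{-\frac12\sum_x\theta_x(Z_x+s)^2}]=A(\theta)\,\bar z_0$, so that
$$\int F_\theta\,d\nu=A(\theta)\Big(1+\frac{\bar z_0}{s}\Big)=A(\theta)\big((I+G_{R,\lambda}\Theta)^{-1}\mathbf{1}\big)(0)=A(\theta)\,W(\theta)=\int F_\theta\,d\mu,$$
which, together with Step 1, proves the theorem.

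\emph{Main obstacle.} All the computations are routine once the state space is finite; the single delicate point is that the resolvent $(\lambda I-A^{(R)}+\Theta)^{-1}$ coming out of Feynman--Kac must dovetail exactly with the Gaussian mean $\bar z$, and the cancellation that turns $1+\bar z_0/s$ into precisely $W(\theta)$ relies on conservativity of $X^{(R)}$, i.e.\ on $G_{R,\lambda}\mathbf{1}=\lambda^{-1}\mathbf{1}$ --- which is exactly what dictates that the corrective factor in Eisenbaum's identity be $1+Z_0/s$, with $Z_0$ the Gaussian coordinate at the starting point. In the general (non-finite) setting of \cite{MarcusRosen} one would moreover have to construct the local times and the associated Gaussian process and establish their joint regularity, but on the torus $\T_R$ this is immediate.
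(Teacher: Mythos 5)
Your argument is correct, but it is worth noting that the paper does not prove this statement at all: Theorem \ref{eisenbaum.theo} is quoted from Marcus and Rosen \cite{MarcusRosen} (Corollary 8.1.2), whose proof is given for a general symmetric Markov process admitting local times and goes through Kac's moment formula matched against the corresponding combinatorial identity for moments of squared Gaussian vectors. Your route is genuinely different: you exploit the finiteness of $\T_R$ to reduce the identity to Laplace transforms, compute $W(\theta)=\lambda\left(\left(\lambda I-A^{(R)}+\Theta\right)^{-1}\mathbf{1}\right)(0)=\left(\left(I+G_{R,\lambda}\Theta\right)^{-1}\mathbf{1}\right)(0)$ by Feynman--Kac, and check that the mean of the tilted Gaussian vector produces exactly the factor $1+\bar z_0/s=W(\theta)$. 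I verified the two pivotal algebraic identities, $\left(G_{R,\lambda}^{-1}+\Theta\right)^{-1}\Theta=I-\left(I+G_{R,\lambda}\Theta\right)^{-1}$ and $G_{R,\lambda}\mathbf{1}=\lambda^{-1}\mathbf{1}$ (the latter because the walk on the torus is killed only by the independent exponential clock), and the reduction to exponential test functions is legitimate since Laplace transforms separate finite signed measures on $[0,\infty)^n$ and $I+G_{R,\lambda}\Theta$ is invertible ($G_{R,\lambda}^{-1}+\Theta$ being positive definite, $A^{(R)}$ symmetric). What your approach buys is a short, self-contained proof of precisely the finite-state instance the paper needs, avoiding any appeal to the general theory of local times; what it gives up is the generality of the Marcus--Rosen statement, which is not needed here.
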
 

\subsection{Step 3: Comparison between $I_{R,\tau}$ and $\nor{Z}_{2q,R}$}
\begin{lemma}
\label{firstbornesup}
Let $\tau$ and $(Z_x, x\in \T_R)$ be defined
as in theorem \ref{eisenbaum.theo}. 
 $\forall\epsilon >0$, there exists a constant $C(\epsilon) \in ]0;\infty[$
depending only on $\epsilon$ such that $\forall  a,\gamma ,R,T >0$:
\begin{align*} 
P \cro{ I_{R,\tau}\geq b_T^q}
\leq 
C(\epsilon) \exp\pare{-\gamma b_T(1+\circ(\epsilon))}
\pare{1+\frac{R^\frac{d}{2q}\sqrt{ T}}{\epsilon b_T\sqrt{2a\epsilon}}} 
\frac{E\cro{\exp\pare{ \frac{\gamma }{2} \nor{Z}_{2q,R}^2}}^{\frac{1}{1+\epsilon}}}
{P\cro{ \nor{Z}_{2q,R} \geq 2 \sqrt{2b_T\epsilon}}}
\end{align*}  
where $\nor{\cdot}_{2q,R}$ is the
$l_{2q}$-norm of functions on $\T_R$. 
\end{lemma}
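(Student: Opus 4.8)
The plan is to combine Lemma \ref{majoesp} with the Eisenbaum isomorphism theorem (Theorem \ref{eisenbaum.theo}) and then exploit concentration of the $l_{2q}$-norm of the Gaussian process. First I would start from $P[I_{R,\tau}\geq b_T^q]$ and relate the event $\{I_{R,\tau}\geq b_T^q\}$ to an event on the shifted process $S_x = l_{R,\tau}(x)+\frac12(Z_x+s)^2$. Since $I_{R,\tau}=\nor{l_{R,\tau}}_q^q$ and $\frac12\nor{(Z+s)^2}_q^q = \frac12\nor{Z+s}_{2q}^{2q}\cdot 2^{-\cdots}$, the triangle inequality in $l_q(\T_R)$ gives $\nor{S}_q^{1/q}\geq \nor{l_{R,\tau}}_q^{1/q}$, hence on $\{I_{R,\tau}\geq b_T^q\}$ we have $\nor{S}_q \geq b_T$ (after taking $q$-th roots appropriately; more precisely $\sum S_x^q \geq \sum l_{R,\tau}^q(x)\geq b_T^q$ by positivity). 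Therefore $P[I_{R,\tau}\geq b_T^q]\leq P[\sum_{x}S_x^q\geq b_T^q]$.

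Next I would apply Theorem \ref{eisenbaum.theo} with $F$ an approximation of the indicator of $\{\sum_x s_x^q \geq b_T^q\}$: for the bounded measurable function $F = \ind_{\{\sum_x \xi_x^q \geq b_T^q\}}$ (defined on nonnegative arguments, extended by $0$ elsewhere) one gets
\begin{equation*}
P\Bigl[\sum_x S_x^q \geq b_T^q\Bigr] = E\Bigl[\ind_{\{2^{-q}\nor{Z+s}_{2q,R}^{2q}\geq b_T^q\}}\bigl(1+\tfrac{Z_0}{s}\bigr)\Bigr].
\end{equation*}
Using $2^{-1}\nor{Z+s}_{2q,R}^2 \leq \nor{Z}_{2q,R}^2 + s^2\va{\T_R}^{1/q}$ and choosing $s^2 \asymp b_T\epsilon$ (so that the constant shift contributes at most an $\epsilon$-fraction of $b_T$, using $\va{\T_R}^{1/q}=R^{d/q}\ll b_T$), the event forces $\nor{Z}_{2q,R}\geq 2\sqrt{2b_T\epsilon}$ up to the $\circ(\epsilon)$ correction. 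To handle the weight $1+Z_0/s$, I would bound $E[\ind_A(1+Z_0/s)]\leq P(A) + s^{-1}E[\ind_A\va{Z_0}]\leq P(A) + s^{-1}\sqrt{P(A)}\sqrt{E[Z_0^2]}$ by Cauchy–Schwarz, with $E[Z_0^2]=G_{R,\lambda}(0,0)$, which is bounded by $G(0,0)+C/(\lambda R^d)$ from Lemma \ref{ineqgreen} and hence uniformly bounded; since $s\asymp\sqrt{b_T\epsilon}$ and $\lambda = ab_T/T$, the factor $s^{-1}$ produces the $\frac{R^{d/2q}\sqrt T}{\epsilon b_T\sqrt{2a\epsilon}}$-type term after tracking $\lambda R^d$ in the denominator bound — this accounting of constants is the fussiest bookkeeping step but is routine.

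Finally I would convert the probability of the large-norm event into the exponential form. Write $P(A')\leq P(A')$ and multiply and divide by $E[\exp(\tfrac\gamma2\nor{Z}_{2q,R}^2)]$: by Markov's inequality applied after Hölder/Jensen with exponent $1+\epsilon$, $P[\nor{Z}_{2q,R}\geq 2\sqrt{2b_T\epsilon}] \leq$ (something), but what we actually want is the reverse — to upper bound $P[\nor{Z}_{2q,R}\text{ large}]$ using $E[\exp(\cdots)]$. The cleanest route is: split $E[\exp(\tfrac\gamma2\nor Z_{2q,R}^2)]$ over the events $\{\nor Z_{2q,R}\geq 2\sqrt{2b_T\epsilon}\}$ and its complement, or rather use the standard device
\begin{equation*}
P\bigl[\nor{Z}_{2q,R}\geq 2\sqrt{2b_T\epsilon}\bigr]
\;\geq\; \frac{\bigl(E[\ind\,e^{\frac\gamma2\nor Z^2_{2q,R}}]\bigr)^{\cdots}}{\cdots},
\end{equation*}
which is exactly what produces $E[\exp(\tfrac\gamma2\nor Z_{2q,R}^2)]^{1/(1+\epsilon)}$ in the numerator and $P[\nor Z_{2q,R}\geq 2\sqrt{2b_T\epsilon}]$ in the denominator after an application of Hölder's inequality with exponents $1+\epsilon$ and $(1+\epsilon)/\epsilon$ to $E[\ind_{A'}\exp(\tfrac\gamma2\nor Z_{2q,R}^2)]\leq E[\exp(\tfrac{\gamma(1+\epsilon)}{2}\nor Z^2_{2q,R})]^{1/(1+\epsilon)}P(A')^{\epsilon/(1+\epsilon)}$, then noting the left side is at least $e^{\frac\gamma2\cdot 8b_T\epsilon}P(A')\gtrsim e^{\gamma b_T(1+\circ(\epsilon))}P(A')$ when combined with the forcing from the previous step. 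Rearranging gives the claimed bound, absorbing absolute constants into $C(\epsilon)$. The main obstacle is getting the direction of the Hölder/Markov manipulation right so that the Gaussian exponential moment appears in the numerator (rather than bounding a probability from above, we are using it to extract the exponential rate $\gamma b_T$ while keeping $P[\nor Z_{2q,R}\text{ large}]$ as a normalizing denominator to be estimated later in Lemma \ref{ldfz.lem}); everything else is careful but standard real-variable bookkeeping.
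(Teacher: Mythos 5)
Your overall architecture (reduce to $\sum_x S_x^q$, apply Eisenbaum, then exploit Gaussian exponential moments) is the paper's, but the step you yourself flag as ``the main obstacle'' is a genuine gap, and it is exactly where the paper does something your sketch is missing. The mechanism that places $P\cro{\nor{Z}_{2q,R}\geq 2\sqrt{2b_T\epsilon}}$ in the denominator is an independence trick applied \emph{before} Eisenbaum: since $(Z_x)$ is independent of the walk and of $\tau$,
\[
P\cro{I_{R,\tau}\geq b_T^q}\, P\cro{\textstyle\sum_x 2^{-q}(Z_x+s)^{2q}\geq b_T^q\epsilon^q}
= P\cro{\text{both}}\leq P\cro{\textstyle\sum_x S_x^q\geq b_T^q(1+\epsilon^q)},
\]
and one then divides by the Gaussian factor. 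This simultaneously keeps a threshold of order $b_T^q$ for the event to which exponential Chebyshev is applied (so Markov on $\ind_{\frac12\nor{Z+s}_{2q,R}^2\geq b_T(1+\epsilon^q)^{1/q}}$ yields the rate $e^{-\gamma b_T(1+\epsilon^q)^{1/q}}$) and produces the denominator, lower-bounded afterwards by $P\cro{\nor{Z}_{2q,R}\geq\sqrt{2b_T\epsilon}+sR^{d/2q}}$. Your route discards this: after $P\cro{I_{R,\tau}\geq b_T^q}\leq P\cro{\sum_x S_x^q\geq b_T^q}$ there is no Gaussian probability left to divide by, and your substitute --- H\"older applied to $E\cro{\ind_{A'}e^{\frac\gamma2\nor{Z}_{2q,R}^2}}$ with $A'=\{\nor{Z}_{2q,R}\geq 2\sqrt{2b_T\epsilon}\}$ --- runs in the wrong direction: it yields an \emph{upper} bound on $P(A')$, and the lower bound $E\cro{\ind_{A'}e^{\frac\gamma2\nor{Z}_{2q,R}^2}}\geq e^{4\gamma\epsilon b_T}P(A')$ extracts only the rate $4\gamma\epsilon b_T=\circ(\gamma b_T)$, not $\gamma b_T(1+\circ(\epsilon))$ as you assert. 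The full rate must come from the event $\{\frac12\nor{Z+s}_{2q,R}^2\geq b_T\}$ itself, before it is weakened to $A'$.

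Two smaller points. Your choice $s^2\asymp b_T\epsilon$ does not make the shift negligible: $s^2\va{\T_R}^{1/q}\asymp\epsilon b_T R^{d/q}\gg b_T$. The correct choice is $s=\epsilon\sqrt{2b_T\epsilon}/R^{d/2q}$, which gives $s^2R^{d/q}=2\epsilon^3 b_T$ and, combined with $\mathrm{Var}(Z_0)=G_{R,\lambda}(0,0)\leq 1/\lambda=T/(ab_T)$, produces exactly the factor $1+\frac{R^{d/2q}\sqrt T}{\epsilon b_T\sqrt{2a\epsilon}}$. Also, the weight $1+Z_0/s$ must be separated from $e^{\frac\gamma2\nor{Z+s}_{2q,R}^2}$ by H\"older with exponents $\frac{1+\epsilon}{\epsilon}$ and $1+\epsilon$ (this is where the exponent $\frac{1}{1+\epsilon}$ on the exponential moment comes from), together with $(a+b)^2\leq(1+\epsilon)a^2+(1+\frac1\epsilon)b^2$ and a final substitution $\gamma\mapsto\gamma/(1+\epsilon)^2$. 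Since $P(A')\leq 1$, a bound without the denominator would formally imply the stated one, but your write-up neither takes that shortcut nor completes the H\"older manipulation, so as it stands the argument does not close.
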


\begin{proof}
\begin{align*}
 S_x :=l_{R,\tau}(x) + \frac{1}{2} (Z_x+s)^2 &
\Rightarrow  S_x^q\geq l^q_{R,\tau}(x) + \pare{\frac{1}{2} (Z_x+s)^2}^q\\
&\Rightarrow  \sum_{x\in\T_R} S_x^q \geq I_{R,\tau} + \sum_{x\in\T_R}\frac{1}{2^q} (Z_x+s)^{2q}.
\end{align*}
By independence of $(Z_x,x\in\T_{R})$ with the random walk $(X_s,s\geq 0)$ and the exponential time $\tau$, we have $\forall \epsilon >0$,
\begin{align}
 \nonumber P\pare{I_{R,\tau}\geq b_T^q } P\pare{\sum_{x\in\T_R}\frac{1}{2^q} (Z_x+s)^{2q}\geq b_T^q\epsilon^q}
=  &\nonumber P\pare{I_{R,\tau}\geq b_T^q , \sum_{x\in\T_R}\frac{1}{2^q} (Z_x+s)^{2q}\geq b_T^q\epsilon^q}\\
\leq & \nonumber P\pare{I_{R,\tau}+\sum_{x\in\T_R}\frac{1}{2^q} (Z_x+s)^{2q}\geq b_T^q (1+\epsilon^q)}\\
= & \nonumber P\pare{\sum_{x\in\T_R} l_{R,\tau}(x)^q + \frac{1}{2^q}(Z_x+s)^{2q}\geq b_T^q (1+\epsilon^q)}\\
\leq &\nonumber P\pare{\sum_{x\in\T_R}S_x^q\geq b_T^q(1+\epsilon^q)}\\
= &\label{ab} E\cro{\pare{1+\frac{Z_0}{s}}\ind_{ \sum\limits_{x\in\T_R}\frac{1}{2^q} (Z_x+s)^{2q}\geq b_T^q(1+\epsilon^q)}},
\end{align}
where the last equality comes from Theorem \ref{eisenbaum.theo}.
Moreover by Markov inequality, $\forall \gamma >0$,

\begin{align}
\nonumber
&E\cro{\pare{1+\frac{Z_0}{s}}\ind_{\sum_{x\in\T_R}\frac{1}{2^q} (Z_x+s)^{2q}\geq b_T^q(1+\epsilon^q)}}\\
\label{b}\leq &\exp(-\gamma b_T(1+\epsilon^q)^\frac{1}{q}) E\cro{\pare{1+\frac{Z_0}{s}}\exp\pare{\gamma\pare{\sum_{x\in\T_R}\frac{1}{2^q} (Z_x+s)^{2q}}^\frac{1}{q}}}.
\end{align}

Combining (\ref{ab}) and (\ref{b}), we obtain that $\forall a,\gamma,\epsilon >0$,
\begin{equation}
 \label{c}
 P\pare{I_{R,\tau}\geq b_T^q } 
\leq \exp(-\gamma b_T(1+\epsilon^q)^\frac{1}{q})\frac{E\cro{\pare{1+\frac{Z_0}{s}}\exp\pare{\frac{\gamma}{2}\|Z+s\|_{2q,R}^2}}}{P(\|Z+s\|_{2q,R}\geq \sqrt{2b_T\epsilon})}  .
\end{equation}

Let us bound $P(\|Z+s\|_{2q,R}\geq \sqrt{2b_T\epsilon})$ from below. Since
$\| Z+s\|_{2q,R} \geq \| Z\|_{2q,R}-\|s\|_{2q,R}$ and $\|s\|_{2q,R}=sR^{\frac{d}{2q}}$, we have
\begin{equation}
\label{e}
P(\|Z+s\|_{2q,R}\geq \sqrt{2b_T\epsilon})\geq P(\|Z\|_{2q,R}\geq \sqrt{2b_T\epsilon}+sR^{\frac{d}{2q}}).
\end{equation}

Then we look for an upper bound of the expectation in (\ref{c}).
Using the fact that $\forall \epsilon >0,\ (a+b)^2\leq (1+\epsilon)a^2+(1+\frac{1}{\epsilon})b^2$
and H\"older's inequality, we obtain that $\forall \epsilon >0$,
\begin{align}
&\nonumber E\cro{\pare{1+\frac{Z_0}{s}}\exp\pare{\frac{\gamma}{2}\|Z+s\|_{2q,R}^2}}\\
\nonumber\leq &E\cro{\pare{1+\frac{Z_0}{s}}\exp\pare{\frac{\gamma}{2}\pare{(1+\epsilon)\|Z\|_{2q,R}^2+(1+\frac{1}{\epsilon})s^2R^{\frac{d}{q}}}}}\\
\nonumber\leq &E \cro{\va{1+ \frac{Z_0}{s}}^{\frac{1+\epsilon}{\epsilon}}} ^{\frac{\epsilon}{1+\epsilon}} E\cro{\exp\pare{\frac{\gamma}{2} (1+\epsilon)^2 \nor{Z}_{2q,R}^2}}^{\frac{1}{1+\epsilon}} \exp\pare{\frac{\gamma}{2} \frac{1+\epsilon}{\epsilon} s^2 R^{d/q}} \\
\label{f}\leq &C(\epsilon)\pare{1+\frac{1}{s\sqrt{\lambda}}} E\cro{\exp\pare{\frac{\gamma}{2} (1+\epsilon)^2 \nor{Z}_{2q,R}^2}}^{\frac{1}{1+\epsilon}} \exp\pare{\frac{\gamma}{2} \frac{1+\epsilon}{\epsilon} s^2 R^{d/q}},
\end{align}
where the last inequality comes from the fact that $Var(Z_0)=G_{R,\lambda}(0,0)\leq E[\tau]=\frac{1}{\lambda}$.\\
We deduce from (\ref{c}), (\ref{e}) and (\ref{f}) that
$\forall \epsilon,a,\theta >0$,
\begin{align*}
&P\pare{I_{R,\tau}\geq b_T^q } \\
\leq &C(\epsilon)\exp\pare{-\gamma b_T(1+\epsilon^q)^\frac{1}{q}}\pare{1+\frac{1}{s\sqrt{\lambda}}}
\frac{E\cro{\exp\pare{\frac{\gamma}{2} (1+\epsilon)^2 \nor{Z}_{2q,R}^2}}^{\frac{1}{1+\epsilon}}}{P(\|Z\|_{2q,R}\geq \sqrt{2b_T\epsilon}+sR^{\frac{d}{2q}})} \exp\pare{\frac{\gamma}{2} \frac{1+\epsilon}{\epsilon} s^2 R^{d/q}}.
\end{align*}

The choice of s being free, we choose $s=\frac{ \epsilon\sqrt{2b_T\epsilon}}{R^\frac{d}{2q}} $. Remember that $\lambda=\frac{a b_T}{T}$ and make the change of variable $\gamma =\frac{\gamma '}{(1+\epsilon)^2}$. We have $\forall \gamma',a,\epsilon>0$,

\begin{align*}
& P\pare{I_{R,\tau}\geq b_T^q } \\
\leq & 
 C(\epsilon) \exp\pare{-\gamma' b_T\frac{(1+\epsilon^q)^\frac{1}{q}}{(1+\epsilon)^2}}
\pare{1+\frac{R^\frac{d}{2q}\sqrt{T}}{\epsilon b_T\sqrt{2a\epsilon}}} 
\frac{E\cro{\exp\pare{ \frac{\gamma '}{2} \nor{Z}_{2q,R}^2}}^{\frac{1}{1+\epsilon}}}
{P\cro{ \nor{Z}_{2q,R} \geq 2 \sqrt{2b_T\epsilon}}}
\exp\pare{\frac{\epsilon^2\gamma' b_T}{1+\epsilon}} .
\end{align*}
\end{proof}

\subsection{Step 4: Large deviations for $\|Z\|_{2q,R}$}
\begin{lemma}
\label{ldfz.lem}
Let $\tau$ and $(Z_x, x\in \T_R)$ be defined
as in theorem \ref{eisenbaum.theo}. Let $\rho_1(a,R,T)$ be defined as in Theorem \ref{fpgd}.
\begin{enumerate}
\item  $\forall a,R,T  >0$, $ 
G_{R,\lambda}(0,0) \leq \rho_1(a,R,T) \leq R^{d/q}G_{R,\lambda}(0,0)$.
\item $\forall a,\epsilon,R,T>0$, 
\begin{equation*} 
P\cro{\nor{Z}_{2q,R} \geq \sqrt{b_T \epsilon}}
\geq 
 \frac{\sqrt{\rho_1(a,R, T)}}{\sqrt{2 \pi b_T \epsilon }} 
\pare{1-\frac{\rho_1(a,R,T)}{b_T \epsilon}} 
\exp\pare{- \frac{b_T \epsilon }{2 \rho_1(a,R, T)}} .
\end{equation*} 
\item  $\exists \, C(q) \text{ such that } 
\forall a,R,T,\epsilon>0$, $\forall \gamma  \text{ such that } \gamma (1+ \epsilon) < \frac{1}{\rho_1(a,R,T)}$,
\begin{equation*} 
 E\cro{\exp\pare{\frac{\gamma}{2} \nor{Z}_{2q,R}^2}}
\leq  \frac{2}{\sqrt{1-\gamma (1+\epsilon)
\rho_1(a,R,T)}}
\exp\pare{C(q)\gamma \frac{1+\epsilon}{\epsilon} R^{d/q} G_{R,\lambda}(0,0)} .
\end{equation*} 
\end{enumerate} 
\end{lemma}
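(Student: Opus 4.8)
I would establish the three assertions separately; each reduces to standard functional inequalities on $\T_R$ together with classical Gaussian estimates, so the work is in assembling them. For part 1, the lower bound comes from testing the variational problem with $f=\delta_0$: one has $\nor{\delta_0}_{(2q)',R}=1$ and the quadratic form equals $G_{R,\lambda}(0,0)$. For the upper bound I would use that $G_{R,\lambda}$ is translation invariant on $\T_R$, hence a symmetric convolution operator; combining H\"older's inequality with exponents $2q,(2q)'$ and Young's inequality yields $\langle f,G_{R,\lambda}f\rangle\leq\nor{G_{R,\lambda}(0,\cdot)}_{q,R}\,\nor{f}_{(2q)',R}^2$, so $\rho_1(a,R,T)\leq\nor{G_{R,\lambda}(0,\cdot)}_{q,R}$. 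Since the kernel is positive semidefinite and $G_{R,\lambda}(x,x)=G_{R,\lambda}(0,0)$ for every $x$, Cauchy--Schwarz gives $G_{R,\lambda}(0,x)\leq G_{R,\lambda}(0,0)$, whence $\nor{G_{R,\lambda}(0,\cdot)}_{q,R}\leq\pare{\card(\T_R)}^{1/q}G_{R,\lambda}(0,0)=R^{d/q}G_{R,\lambda}(0,0)$.

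For part 2, I would first note that the supremum defining $\rho_1(a,R,T)$ is attained: the unit sphere of $\nor{\cdot}_{(2q)',R}$ in $\R^{\T_R}$ is compact and the quadratic form is continuous; let $f_0$ realize it. Then $\langle f_0,Z\rangle$ is a centered Gaussian with variance exactly $\langle f_0,G_{R,\lambda}f_0\rangle=\rho_1(a,R,T)$, and $\ell^{2q}$--$\ell^{(2q)'}$ duality gives $\nor{Z}_{2q,R}\geq\langle f_0,Z\rangle$ pointwise, so $P\cro{\nor{Z}_{2q,R}\geq\sqrt{b_T\epsilon}}\geq P\cro{\langle f_0,Z\rangle\geq\sqrt{b_T\epsilon}}$. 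The claim is then exactly the classical Gaussian tail bound $\int_z^{+\infty}e^{-t^2/2}\,dt\geq(z^{-1}-z^{-3})e^{-z^2/2}$, taken at $z=\sqrt{b_T\epsilon/\rho_1(a,R,T)}$.

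Part 3 is the substantial one. I would write $Z=\Sigma^{1/2}W$ with $\Sigma=G_{R,\lambda}$ (invertible since $\lambda>0$) and $W$ a standard Gaussian on $\R^{\T_R}$, and set $\phi(w)=\nor{\Sigma^{1/2}w}_{2q,R}$, so that $\nor{Z}_{2q,R}=\phi(W)$. A duality computation identifies the Lipschitz constant of $\phi$: $\nor{\Sigma^{1/2}}^2_{\ell^2(\T_R)\to\ell^{2q}(\T_R)}=\sup_{\nor{w}_2=1}\nor{\Sigma^{1/2}w}_{2q,R}^2=\sup_{\nor{f}_{(2q)',R}=1}\nor{\Sigma^{1/2}f}_2^2=\sup_{\nor{f}_{(2q)',R}=1}\langle f,\Sigma f\rangle=\rho_1(a,R,T)$, so $\phi$ is $\sqrt{\rho_1(a,R,T)}$-Lipschitz. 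Gaussian concentration then gives $E\cro{\exp\pare{t(\phi(W)-m)}}\leq\exp\pare{\tfrac{t^2}{2}\rho_1(a,R,T)}$ for all $t$, where $m=E\cro{\nor{Z}_{2q,R}}$; inserting this into the identity $e^{u^2/2}=E_Y\cro{e^{uY}}$ (with $Y$ an independent standard Gaussian and $u=t(\phi(W)-m)$) and using Fubini yields, whenever $\gamma(1+\epsilon)\rho_1(a,R,T)<1$,
\begin{equation*}
E\cro{\exp\pare{\tfrac{\gamma(1+\epsilon)}{2}(\phi(W)-m)^2}}\leq E_Y\cro{\exp\pare{\tfrac{\gamma(1+\epsilon)}{2}\rho_1(a,R,T)\,Y^2}}=\pare{1-\gamma(1+\epsilon)\rho_1(a,R,T)}^{-1/2}.
\end{equation*}
Separately, Jensen's inequality (using $q>1$) and the Gaussian moment formula, together with $\text{Var}(Z_x)=G_{R,\lambda}(0,0)$, give $m^2\leq E\cro{\nor{Z}_{2q,R}^2}\leq C(q)R^{d/q}G_{R,\lambda}(0,0)$. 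Plugging these into the pointwise estimate $\nor{Z}_{2q,R}^2\leq(1+\epsilon)(\phi(W)-m)^2+(1+\tfrac{1}{\epsilon})m^2$ and factoring the exponential produces the stated bound (the spare factor $2$ in the numerator being ample slack).

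The main obstacle is the Lipschitz-constant identity in part 3: the operator norm of $\Sigma^{1/2}$ from $\ell^2(\T_R)$ to $\ell^{2q}(\T_R)$ is exactly $\sqrt{\rho_1}$, so that the Gaussian concentration proxy is $\rho_1$ itself and not a cruder quantity; this, combined with the Gaussian-square moment-generating-function trick, is precisely what delivers both the sharp constant and the square root in the exponential bound. Parts 1 and 2, by contrast, are routine once the convolution and duality structure on the torus is exploited.
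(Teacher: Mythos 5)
Your proposal is correct. Parts 1 and 2 follow essentially the paper's route: for part 1 the paper bounds $\langle f,G_{R,\lambda}f\rangle$ by $\sup_{x,y}G_{R,\lambda}(x,y)\,\nor{f}_{1,R}^2$ and shows $\sup_{x,y}G_{R,\lambda}(x,y)=G_{R,\lambda}(0,0)$ by the strong Markov property at the hitting time $T_x$, whereas you use Young's convolution inequality together with $G_{R,\lambda}(0,x)\leq G_{R,\lambda}(0,0)$; both land on $R^{d/q}G_{R,\lambda}(0,0)$. For part 2 the paper takes the supremum over all $f$ with $\nor{f}_{(2q)',R}=1$ after inserting $\sigma^2(f)\leq\rho_1$ in the middle factor, while you invoke the (finite-dimensional, hence legitimate) maximizer $f_0$ directly; these are the same argument. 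In part 3 your implementation genuinely differs from the paper's: the paper centers $\nor{Z}_{2q,R}$ at its \emph{median} $M$, bounds $M^2\leq C(q)R^{d/q}G_{R,\lambda}(0,0)$ via $\mathrm{median}(X)\leq 2E[X]$ and Gaussian moments, and converts the two-sided concentration tail $P\cro{\va{\nor{Z}_{2q,R}-M}\geq\sqrt{u}}\leq 2P\pare{Y\geq\sqrt{u/\rho_1}}$ into the exponential moment by the layer-cake formula; you center at the \emph{mean} and use the Lipschitz--Herbst moment-generating-function bound combined with Gaussian randomization of the square. Both hinge on the same key identification — the concentration parameter is exactly $\rho_1(a,R,T)$, which you obtain as the squared operator norm of $\Sigma^{1/2}:\ell^2(\T_R)\to\ell^{2q}(\T_R)$ and the paper reads off from the general concentration inequality for norms of Gaussian vectors — and both yield the stated estimate (yours without the spare factor $2$). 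The mean-versus-median choice costs nothing since either centering constant admits the bound $C(q)R^{d/q}G_{R,\lambda}(0,0)$.
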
 

\begin{proof}
\begin{enumerate}
\item 
For the lower bound, let take $f=\delta_0$: $\rho_1(a,R,T)\geq G_{R,\lambda}(0,0)$.\\
For the upper bound,
\begin{align*}
\rho_1(a,R,T)&=\sup\acc{\sum_{x,y\in \T_R} f_x G_{R,\lambda}(x,y) f_y \, ; \, f \text{ such that }  \nor{f}_{(2q)',R}=1}\\
&\leq \sup\limits_{x,y\in\T_R} G_{R,\lambda}(x,y) \sup\acc{ \nor{f}_{1,R}^2\, ; \, f \text{ such that }  \nor{f}_{(2q)',R}=1}.
\end{align*}
On one hand, $\nor{f}_{1,R}\leq \nor{f}_{(2q)',R}\nor{1}_{2q,R}=R^{d/2q}$. On the other hand, denote by $T_x$ the first time where the walk is at state $x$. Then, 
\begin{align*}
\sup\limits_{x,y\in\T_R}G_{R,\lambda}(x,y)&= \sup\limits_{x\in\T_R}G_{R,\lambda}(0,x)
= \sup\limits_{x\in\T_R}E_0[l_\tau^R (x)]\\
&\leq\sup\limits_{x\in\T_R}E_0[E_x[l_\tau^R(x)]\ind_{T_x\leq \tau}]
=\sup\limits_{x\in\T_R}G_{R,\lambda}(x,x)P_0(T_x\leq \tau) \\
&\leq G_{R,\lambda}(0,0).
\end{align*}
\item By H\"older's inequality, $\forall f$ such that $\|f\|_{(2q)',R}=1$
\[
P\cro{\nor{Z}_{2q,R} \geq \sqrt{b_T \epsilon}}
 \geq P \cro{\sum_{x \in \T_R} f_x Z_x \geq \sqrt{ b_T  \epsilon} }
\, .
\]
Since $\sum_{x \in \T_R} f_x Z_x$ is a real centered Gaussian variable with
variance 
\[ \sigma^2_{a,R,T}(f)= 
\sum_{x,y  \in \T_R} G_{R,\lambda}(x,y) f_x f_y \, ,
\]
we have:
\begin{eqnarray*} 
P\cro{\nor{Z}_{2q,R} \geq  \sqrt{b_T \epsilon}}
& \geq & 
\frac{\sigma_{a,R,T}(f)}{\sqrt{2\pi} \sqrt{b_T \epsilon}} 
\pare{1 - \frac{\sigma^2_{a,R,T}(f)}{b_T \epsilon}} 
\exp\pare{- \frac{b_T \epsilon }{2 \sigma^2_{a,R,T}(f)}} 
 \\
& \geq & 
\frac{\sigma_{a,R,T}(f)}{\sqrt{2\pi} \sqrt{b_T \epsilon}} 
\pare{1 - \frac{\rho_1(a,R,T)}{b_T \epsilon}} 
\exp\pare{- \frac{b_T \epsilon }{2 \sigma^2_{a,R,T}(f)}}.
\end{eqnarray*}
Taking the supremum over $f$ we obtain that $\forall a,R,T,\epsilon >0$,
$$
P\cro{\nor{Z}_{2q,R} \geq \sqrt{b_T \epsilon}}
\geq 
 \frac{\sqrt{\rho_1(a,R, T)}}{\sqrt{2 \pi b_T \epsilon }} 
\pare{1-\frac{\rho_1(a,R,T)}{b_T \epsilon}} 
\exp\pare{- \frac{b_T \epsilon }{2 \rho_1(a,R, T)}}. 
$$ 
\item
Let $M$ be the median of $\|Z\|_{2q,R}$. We can easily see that
\begin{equation}
\label{g}
E\cro{\exp\pare{\frac{\gamma}{2} \nor{Z}_{2q,R}^2}}
\leq E\cro{\exp\pare{\frac{\gamma}{2}(1+\epsilon)(\|Z\|_{2q,R}-M)^2}}
 \exp(\frac{\gamma}{2}\frac{1+\epsilon}{\epsilon}M^2).
 \end{equation}

Since $M=(\text{median}(\sum_x Z_x^{2q}))^{1/2q}$ and that for $X\geq 0,\ \text{ median}(X)\leq 2 E[X]$, we get:
\begin{align*}
M^2 &=(\text{median}(\sum_x Z_x^{2q}))^{1/q}\\
&\leq (2E[\sum_x Z_x^{2q}])^{1/q}\\
&\leq C(q)(\sum_x G_{R,\lambda}(0,0)^q E[Y^{2q}])^{1/q},\text{ where } Y\sim\mathcal{N}(0,1)\\
&\leq C(q)R^{d/q}G_{R,\lambda}(0,0)(E[Y^{2q}])^{1/q}\\
&\leq C(q)R^{d/q}G_{R,\lambda}(0,0).
\end{align*}
Thus, 
\begin{equation}
\label{h}
\exp \pare{\frac{\gamma}{2}\frac{1+\epsilon}{\epsilon}M^2}
\leq \exp \pare{\gamma\frac{1+\epsilon}{\epsilon}C(q)R^{d/q}G_{R,\lambda}(0,0)}.
\end{equation}
We find now an upper bound of the expectation in (\ref{g}).
Using concentration inequalities for norms of gaussian processes, $\forall u > 0$, 

$P\cro{\va{\nor{Z}_{2q,R} - M_{R,T}} \geq \sqrt{u}} 
\leq 2 P( Y \geq \sqrt{\frac{u}{\rho_1(a, R, T)} }) $
where $Y\sim \mathcal{N}(0,1)$. Then:
\begin{align}
\nonumber & E\cro{\exp\pare{\frac{\gamma}{2}(1+\epsilon)(\|Z\|_{2q,R}-M)^2}}\\
& \hspace*{2cm} =\nonumber 1+\int_1^{+\infty} P\pare{\exp\pare{\frac{\gamma}{2}(1+\epsilon)(\|Z\|_{2q,R}-M)^2}\geq u}du\\
& \hspace*{2cm}=\nonumber 1+\int_1^{+\infty} P\pare{\va{\|Z\|_{2q,R}-M}\geq \sqrt{\frac{2\ln(u)}{\gamma (1+\epsilon)}}}du\\
& \hspace*{2cm}\leq \nonumber 1+2\int_1^{+\infty} P\pare{Y^2\geq \frac{2\ln(u)}{\gamma (1+\epsilon)\rho_1(a,R,T)}}du\\
& \hspace*{2cm}=\nonumber -1+2  E\cro{\exp\pare{\frac{\gamma(1+\epsilon)\rho_1(a, R,T)}{2 
} Y^2}}
\\
& \hspace*{2cm}=  -1 + \frac{2}{\sqrt{1-\gamma(1+\epsilon)\rho_1(a,R,T)}}
\leq  \label{i}\frac{2}{\sqrt{1-\gamma(1+\epsilon)\rho_1(a,R,T)}}.
\end{align}
Remark that it is only true for $\gamma,\epsilon$ such that $\gamma (1+\epsilon)<\frac{1}{\rho_1(a,R,T)}$.
We deduce putting together (\ref{g}),(\ref{h}) and (\ref{i}), that
$$E\cro{\exp\pare{\frac{\gamma}{2} \nor{Z}_{2q,R}^2}}
\leq \frac{2}{\sqrt{1-\gamma(1+\epsilon)\rho_1(a,R,T)}}\exp \pare{\gamma\frac{1+\epsilon}{\epsilon}C(q)R^{d/q}G_{R,\lambda}(0,0)}  .$$

\end{enumerate}
\end{proof}

\subsection{Proof of Theorem \ref{fpgd}}

\begin{proof}First we remark that if $\rho_1$ is infinite, then theorem \ref{fpgd} is obvious. So we assume now that $\rho_1$ is finite.
Combining Lemma \ref{majoesp} and Lemma \ref{firstbornesup} we have proved that: $\forall \epsilon,\gamma,a,R,T>0$,
\begin{equation}
 P \cro{ I_T\geq b_T^q} 
\leq \label{j}C(\epsilon)\exp(a b_T)\exp\pare{-\gamma b_T (1+\circ(\epsilon))}
\pare{1+\frac{R^\frac{d}{2q}\sqrt{T}}{\epsilon b_T\sqrt{2a\epsilon}}} 
\frac{E\cro{\exp\pare{ \frac{\gamma }{2} \nor{Z}_{2q,R}^2}}^{\frac{1}{1+\epsilon}}}
{P\cro{ \nor{Z}_{2q,R} \geq 2 \sqrt{2b_T\epsilon}}}.
\end{equation}
First, lemma \ref{ldfz.lem} gives that $\forall \gamma$ such that $\gamma(1+\epsilon)<\frac{1}{\rho_1(a,R,T)}$,\\
$E \cro{\exp\pare{\frac{\gamma}{2} \nor{Z}_{2q,R}^2}}^{\frac{1}{1+\epsilon}} \leq \exp (\frac{\gamma}{\epsilon}C(q)R^{d/q}G_{R,\lambda}(0,0)) \pare{\frac{2}{\sqrt{1-\gamma(1+\epsilon)\rho_1(a,R,T)}}}^{\frac{1}{1+\epsilon}}.$\\
Since $\rho_1$ is finite, for $a$ little enough, $1/\rho_1(a)>0$ and we can choose $\gamma$ such that $0<\gamma<\frac{1}{\rho_1(a)}$.
 Then it is possible to choose $\epsilon>0$ such that $\gamma (1+2\epsilon)<\frac{1}{\rho_1(a)}$. Hence
 for T sufficiently large $\frac{1}{\rho_1(a,R,T)}>\gamma (1+2\epsilon)$, then it follows that 
\\$E \cro{\exp\pare{\frac{\gamma}{2} \nor{Z}_{2q,R}^2}}^{\frac{1}{1+\epsilon}} \leq \exp (\frac{\gamma}{\epsilon}C(q)R^{d/q}G_{R,\lambda}(0,0)) \pare{2\sqrt{\frac{1+2\epsilon}{\epsilon}}}^\frac{1}{1+\epsilon}$.\\
 We recall that we have assumed that $\lambda$ and $R$ depend on $T$ in such a way that $\lambda R^d \gg1$ and $\lambda\ll 1$, which implies that we are in conditions  of application of Lemma \ref{green}. So we know that $G_{R,\lambda (0,0)}\rightarrow G(0,0)$. Moreover we have assumed that $b_T\gg R^\frac{d}{q}$, therefore we have:
\begin{equation}
\label{k}
 \limsup_{T \rightarrow \infty} \frac{1}{b_T} \log E \cro{\exp\pare{\frac{\gamma}{2} \nor{Z}_{2q,R}^2}}^{\frac{1}{1+\epsilon}} =0.
 \end{equation}
Then we work on the probability $P \cro{ \nor{Z}_{2q,R} \geq \sqrt{8 b_T \epsilon}}$ in (\ref{j}).\\
 In the same way that previously we use $\rho_1(a,R,T)<\frac{1}{\gamma (1+2\epsilon)}$,
 $\rho_1(a,R,T)\geq G_{R,\lambda}(0,0)$ and lemma \ref{ldfz.lem} to obtain:
\begin{align*}
P \cro{ \nor{Z}_{2q,R} \geq \sqrt{8 b_T \epsilon}}
\geq &\frac{\sqrt{\rho_1(a,R, T)}}{4\sqrt{ \pi b_T \epsilon }} 
\pare{1-\frac{\rho_1(a,R,T)}{8b_T \epsilon}} 
\exp\pare{- \frac{4b_T \epsilon }{ \rho_1(a,R, T)}}\\
\geq &\frac{\sqrt{G_{R,\lambda}(0,0)}}{4\sqrt{ \pi b_T \epsilon }} 
\pare{1-\frac{1}{8b_T \epsilon\gamma (1+2\epsilon)}} 
\exp\pare{-\frac{4b_T \epsilon }{ G_{R,\lambda}(0,0)}}.
\end{align*}
We conclude from $G_{R,\lambda (0,0)}\rightarrow G(0,0)$ that
\begin{equation}
\label{l}
\limsup_{T \rightarrow \infty} \frac{1}{b_T} \log  P \cro{ \nor{Z}_{2q,R} \geq \sqrt{8 b_T \epsilon}} \geq -\frac{4\epsilon}{ G(0,0) }   . 
\end{equation}
Putting together (\ref{j}),(\ref{k}) and (\ref{l}), we have for $b_T\gg \log(T)$
\[
\limsup_{T \rightarrow \infty} \frac{1}{b_T} \log 
P\cro{I_T\geq b_T^q}
\leq a   -\gamma (1+\circ (\epsilon))+\frac{ 4 \epsilon}{  G(0,0)} \, .
\]

Let send $\epsilon$ to 0 then $\gamma$ to $\frac{1}{\rho_1(a)}$. We obtain that
for $a$ little enough
$$\limsup_{T \rightarrow \infty} \frac{1}{b_T} \log 
P\cro{I_T\geq b_T^q}
\leq a -\frac{1}{\rho_1(a)}.$$

Let $(a_n)$ be a sequence converging to 0 such that 
$\limsup\limits_{n \rightarrow \infty} \rho_1(a_n) = \rho_1$:
\[\limsup_{T \rightarrow \infty} 
\frac{1}{b_T} \log 
P\cro{I_T\geq b_T^q}
\leq a_n -\frac{1}{\rho_1(a_n)}\, .
\] 
Then we let $n$ go to infinity. We finish the proof by showing that the conditions $\lambda R^d\gg 1$, $b_T^q\gg R^d$ and $\log(T)\ll b_T\ll T$ are compatible. Indeed, the first two conditions imply that $b_T\gg T^\frac{1}{q+1}$. In conclusion, we have proved that for $T^\frac{1}{q+1}\ll b_T\ll T$:
$$\limsup\limits_{T\rightarrow +\infty}\frac{1}{b_T} \log P\cro{I_T\geq b_T^q}\leq -\frac{1}{\rho_1}.$$
\end{proof}
\section{Lower bound}

This part is devoted to the proof of the large deviations lower bound.
\begin{theo}
\label{tlb}
  Lower bound for $I_T$. \\
 Assume that $q(d-\alpha) \geq d$ and $b_T\ll T$ then
\begin{equation} 
\label{SILT4LB.eq}
 \liminf_{T \rightarrow \infty} \frac{1}{b_T} \log P \cro{I_T \geq b_T^q}
\geq  - \kappa(q)  .
\end{equation}
\end{theo}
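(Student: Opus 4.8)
The statement is vacuous when $\kappa(q)=+\infty$, so by Proposition~\ref{kapparho} I may assume $0<\kappa(q)<\infty$. The only tool is the Donsker--Varadhan restricted large deviation principle for the normalized occupation measure $L_t:=\frac{1}{t}l_t\in\FF$, at speed $t$, with rate function $\mathcal J(\nu)=<\sqrt\nu,-A\sqrt\nu>$; it applies because $\mu$ is symmetric, so the walk is reversible with respect to the counting measure on $\Z^d$, and although $\Z^d$ is not compact the lower bound of this principle --- the only part I need --- holds for every weakly open subset of $\FF$. I would combine it with two soft facts: that $I_t=\nor{l_t}_q^q=t^q\nor{L_t}_q^q$ is nondecreasing in $t$, and that $\nu\mapsto\nor{\nu}_q$ is lower semicontinuous on $\FF$ (it is the supremum, over finite sets $F$, of the weakly continuous functionals $\nu\mapsto(\sum_{x\in F}\nu(x)^q)^{1/q}$), so that each superlevel set $\{\nu:\nor{\nu}_q>c\}$ is open.

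\textbf{Step 1: a finitely supported near optimizer.} Fix $\epsilon>0$. From the definition of $\kappa(q)$ I would pick $f$ with $\nor{f}_2=1$ and $<f,-Af>\ \le(\kappa(q)+\epsilon)\nor{f}_{2q}^2$; replacing $f$ by $\va f$ does not increase $<f,-Af>$ and leaves all $l^p$-norms unchanged, and truncating $f$ to a large ball and renormalizing in $l^2$ perturbs $<f,-Af>$, $\nor{f}_2$, $\nor{f}_{2q}$ continuously (dominated convergence in $<f,-Af>=\frac12\sum_{x,y}\mu(y-x)(f(x)-f(y))^2$). So after slightly enlarging $\epsilon$ I may assume $f\ge 0$ is finitely supported, $\nor{f}_2=1$, and $<f,-Af>\ \le(\kappa(q)+\epsilon)\nor{f}_{2q}^2$. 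Put $\nu:=f^2\in\FF$: it has finite support, $\nor{\nu}_q=\nor{f}_{2q}^2=:c\in\,]0,1]$, and $\mathcal J(\nu)=<f,-Af>\ \le(\kappa(q)+\epsilon)\,c$.

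\textbf{Step 2: time scale and conclusion.} Let $U:=\{\mu\in\FF:\nor{\mu}_q>c(1-\epsilon)\}$, which is open and contains $\nu$, and set $t_T:=\lceil b_T/(c(1-\epsilon))\rceil$. Since $c(1-\epsilon)$ is a fixed positive constant and $b_T\ll T$, one has $t_T\le T$ for $T$ large --- this is the only place where $b_T\ll T$ is used --- and $t_T\sim b_T/(c(1-\epsilon))\to\infty$. On $\{L_{t_T}\in U\}$ one has $\nor{l_{t_T}}_q=t_T\nor{L_{t_T}}_q>t_T\,c(1-\epsilon)\ge b_T$, hence $I_{t_T}\ge b_T^q$, hence $I_T\ge I_{t_T}\ge b_T^q$ by monotonicity in time; therefore $P[I_T\ge b_T^q]\ge P[L_{t_T}\in U]$. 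The Donsker--Varadhan lower bound applied to the open set $U$ along the times $t_T\to\infty$ gives $\liminf_{T}\frac{1}{t_T}\log P[L_{t_T}\in U]\ge-\mathcal J(\nu)\ge-(\kappa(q)+\epsilon)c$, and since $t_T\sim b_T/(c(1-\epsilon))$ (and the quantities are bounded) multiplying by $1/(c(1-\epsilon))$ yields $\liminf_{T}\frac{1}{b_T}\log P[I_T\ge b_T^q]\ge-\frac{\kappa(q)+\epsilon}{1-\epsilon}$. Letting $\epsilon\downarrow 0$ gives the claim.

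\textbf{Main difficulty.} There is no real obstacle: this is the classical soft lower bound mentioned in the sketch. The two points requiring care are that the walk must be run only up to a time $t_T$ of order $b_T$ (not up to $T$), which is exactly where $b_T\ll T$ enters, and that one has at hand only the \emph{restricted} Donsker--Varadhan principle --- but its open-set lower bound is unconditional, and the lower semicontinuity of $\nu\mapsto\nor{\nu}_q$ supplies the open set. The clean cancellation of $c=\nor{\nu}_q$ in Step~2 is what produces exactly the constant $\kappa(q)$ without having to optimize over an extra parameter.
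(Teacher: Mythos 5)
Your proof is correct and follows essentially the same route as the paper: both run the walk only up to a time of order $b_T$ (this is where $b_T\ll T$ enters), then invoke the Donsker--Varadhan lower bound together with the lower semicontinuity of $\nu\mapsto\nor{\nu}_q$ to produce the open set. The only cosmetic difference is that the paper keeps a free time-scale parameter $M$ and then proves the variational identity $\inf_{M>0}M\kappa_1(y/M)=y\kappa(q)$, whereas you tie $M$ directly to a near-optimal $f$ via $c=\nor{f}_{2q}^2$ so the cancellation happens at a single test function (your Step~1 truncation, while harmless, is not needed for this, since the paper bounds by the infimum over the open set rather than at a single $\nu$).
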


\begin{proof}
Fix $M > 0$. 
Let $T_0$ be such that for all $T \geq T_0$, $\frac{T}{b_T} > M$. For 
$T \geq T_0$, we have:
\[
P\cro{I_T \geq b_T^q } 
\geq P\cro{I_{Mb_T} \geq b_T^q }
= P\cro{\nor{\frac{l_{Mb_T}}{Mb_T}}_q \geq \frac{1}{M}}.
\]
The function $\nu \in \FF \mapsto \nor{\nu}_q
=\sup\limits_{f; \nor{f}_{q'}=1} \acc{\sum_x \nu(x) f(x)}$ is lower semicontinuous in $\tau$-topology hence
$\forall t >0$, $\acc{ \nu \in \FF\, ,  \nor{\nu}_q >t}$ is an open
subset of $\FF$. Therefore, using the classical results of Donsker and Varadhan \cite{DV} on local time of Markov process, we have that $\forall \epsilon >0$,
\begin{eqnarray*} 
\liminf_{T \rightarrow  \infty} \frac{1}{Mb_T} \log 
P \cro{\nor{\frac{l_{Mb_T}}{Mb_T}}_q \geq \frac{1}{M}} 
& \geq & \liminf_{T \rightarrow  \infty}
 \frac{1}{Mb_T} \log 
P \cro{\nor{\frac{l_{Mb_T}}{Mb_T}}_q >  \frac{1-\epsilon}{M}}
\\
& \geq & - \inf\limits_f \acc{<f,-Af> \, ; \nor{f}_2=1 \, , \nor{f}_{2q}^2 > 
 \frac{1-\epsilon}{M}} \, . 
\end{eqnarray*} 

We have thus proved that  $\forall M >0$, 
$\forall \epsilon >0$,
\[\liminf_{T \rightarrow  \infty} \frac{1}{b_T} \log
P \cro{I_T \geq  b_T^q} \geq -  M \kappa_1\pare{\frac{1-\epsilon}{M}} \, 
\]
where $\kappa_1(y) := \inf\limits_f \acc{<f,-Af> \, ; \,\,  \nor{f}^2_{2q} >  
y  \, , \,\, \nor{f}_2=1}$.\\
It remains to prove that for
$\forall y > 0$,\ \ \ $\inf_{M > 0} M \kappa_1(y/M) =  y\kappa (q)$.
 \begin{eqnarray*} 
\inf_{M > 0} M \kappa_1(y/M) 
& = & y \inf_{M > 0} M \kappa_1(1/M)
\\ 
& = & y \inf_{M > 0} \inf_{f} \acc{ M <f,-Af> \, ; 
	\nor{f}_2=1 \, , \nor{f}_{2q}^2 > \frac{1}{M} }
\\
& = &  y \inf_f \inf_{M > 0} \acc{M <f,-Af>;
M > \frac{1}{  \nor{f}_{2q}^2},\ \nor{f}_2=1}
\\
\label{ro3.eq}
& = & y \inf_f \acc{ 
\frac{<f,-Af>}{\nor{f}_{2q}^2}, \ \nor{f}_2=1} \, ;
\\
& = & y\kappa(q) \, .
\end{eqnarray*} 
To finish the proof it suffices to let $\epsilon\rightarrow 0$.

\end{proof}
\section{Proof of proposition \ref{kapparho} and theorem \ref{ld}}
Until now we have obtained a lower bound with $\kappa(q)$ and an upper bound with $\rho_1$. We show in proposition \ref{rho1} another upper bound for large deviations of $I_T$ with the constant $\rho(q)$. Then in proposition \ref{kappafini} we prove that $\kappa(q)$ is a non degenerate constant and we finish the proof of our large deviations principle with Proposition \ref{kappa}, where we show that the upper bound and the lower bound are the same.

\begin{prop}: {\bf Behavior of $\rho_1(a, R, T)$}.\\
\label{rho1}
Assume that $q(d-\alpha)\geq d$ and that
$\lambda$ and $R$ depend on $T$ in such a way that $\lambda R^{d/q'} \gg 1$, then
under assumption 1 we have: $$\rho_1 \leq \rho (q).$$
\end{prop}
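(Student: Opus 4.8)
The goal is to show $\rho_1 \le \rho(q)$, where $\rho_1 = \limsup_{a\to 0}\limsup_{T\to\infty}\rho_1(a,R,T)$ and $\rho_1(a,R,T)$ is the operator-norm-type quantity associated with $G_{R,\lambda}$ acting on $l^{(2q)'}(\T_R)$. The natural strategy is to fix a near-optimizer $f$ on $\T_R$ with $\nor{f}_{(2q)',R}=1$ realizing $\rho_1(a,R,T)$ up to $o(1)$, and to transport it to a compactly supported function on $\Z^d$ that is almost admissible for $\rho(q)$. The key comparison is Lemma~\ref{ineqgreen}, which gives $G_{R,\lambda}(x,y)\le G(x,y)+\frac{C}{\lambda R^d}$; summing against $f$ yields
\[
\sum_{x,y\in\T_R} f(x)G_{R,\lambda}(x,y)f(y)\le \sum_{x,y\in\T_R} f(x)G(x,y)f(y)+\frac{C}{\lambda R^d}\nor{f}_{1,R}^2.
\]
By Hölder $\nor{f}_{1,R}\le \nor{f}_{(2q)',R}R^{d/(2q)}=R^{d/(2q)}$, so the error term is $O\pare{\frac{1}{\lambda R^{d/q'}}}$ (using $\frac{1}{q}+\frac{1}{q'}=1$ so that $d-\frac{d}{q}=\frac{d}{q'}$), which vanishes precisely under the hypothesis $\lambda R^{d/q'}\gg 1$. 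Here $G$ denotes the Green function lifted to $\Z^d$, and one must check that $\sum_{x,y\in\T_R}f(x)G(x,y)f(y)$ is dominated by $\langle \tilde f, G\tilde f\rangle$ for a suitable lift $\tilde f$ on $\Z^d$; the cleanest route is to view the left side already as an inner product on $\Z^d$ of $G$ with the periodization of $f$ — but periodization may not preserve the $(2q)'$ norm, so instead one restricts $f$ to a fundamental domain, extends by zero, and absorbs the (negligible) boundary/wrap-around contributions.

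More precisely, the plan is: (i) pick $f_{a,R,T}$ on $\T_R$ with $\nor{f}_{(2q)',R}=1$ and $\sum f G_{R,\lambda} f \ge \rho_1(a,R,T)-\epsilon$; (ii) let $\bar f$ be $f$ read off on the representative box $[0,R)^d\subset\Z^d$ and extended by $0$, so $\bar f$ has compact support and $\nor{\bar f}_{(2q)'}=1$ exactly; (iii) bound $\sum_{x,y\in\T_R}f(x)G_{R,\lambda}(x,y)f(y)$ by $\langle \bar f, G\bar f\rangle + (\text{wrap terms}) + \frac{C}{\lambda R^{d/q'}}$, where the wrap terms come from pairs $x,y$ whose torus distance is realized by a nontrivial lattice translate; these are controlled by the tail bound $G(0,z)=O(|z|^{\alpha-d})$ (Le Gall--Rosen, or directly the Bass--Levin heat kernel estimates invoked throughout the paper) together with $\alpha<d$, giving a contribution that is $o(1)$ as $R\to\infty$; (iv) conclude $\rho_1(a,R,T)-\epsilon \le \langle \bar f,G\bar f\rangle + o(1) \le \rho(q)+o(1)$ since $\bar f$ is admissible for $\rho(q)$; (v) take $T\to\infty$, then $a\to 0$, then $\epsilon\to 0$.

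\textbf{Main obstacle.} The delicate point is step (iii): the periodization of a function that is optimal for the torus problem need not be close to admissible for the whole-space functional, because the $l^{(2q)'}$ norm does not behave simply under wrapping, and a near-optimizer could in principle concentrate near the boundary of the fundamental domain where wrap-around interactions through $G_{R,\lambda}$ are not negligible a priori. Handling this requires either (a) a smoothing/truncation argument showing one may assume the optimizer is supported well inside the box with a controlled loss, exploiting that $G_{R,\lambda}$ has the same local singularity as $G$ and that moving mass inward changes the quadratic form by a vanishing amount; or (b) directly estimating $\sum_{x,y}f(x)G_{R,\lambda}(x,y)f(y)-\langle\bar f,G\bar f\rangle$ using $G_{R,\lambda}(x,y)=\sum_{k\in\Z^d}G(x,y+Rk)$ (the periodization of $G$ itself) and bounding the $k\ne 0$ terms by $\frac{C}{(R|k|)^{d-\alpha}}$, which after summing in $k$ and applying Young/Hölder to $f$ gives $O(R^{-(d-\alpha)}R^{d/q'}) = O(R^{\alpha - d + d/q'}) = O(R^{-\alpha(1-1/q')}) = O(R^{-\alpha/q})\to 0$. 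Route (b) is essentially the same estimate as in Lemma~\ref{ineqgreen} and is the one I would carry out, since it avoids any regularity discussion of the optimizer and directly produces the required $o(1)$ error under $\lambda R^{d/q'}\gg 1$.
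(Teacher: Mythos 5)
Your overall strategy --- take a (near-)optimizer $f_0$ of $\rho_1(a,R,T)$, use Lemma \ref{ineqgreen} to trade $G_{R,\lambda}$ for $G$ at a cost $\frac{C}{\lambda R^d}\nor{f_0}_{1,R}^2=O\pare{\frac{1}{\lambda R^{d/q'}}}$, and test the zero-extension of $f_0$ in the definition of $\rho(q)$ --- is exactly the paper's, and you have correctly isolated the one genuine difficulty: pairs $x,y$ whose torus distance is $O(1)$ but whose representatives in $[0,R)^d$ sit near opposite faces. However, route (b), which you say you would carry out, does not close this gap. First, the bound $G(y-x+Rk)\leq C\pare{R\va{k}}^{\alpha-d}$ for $k\neq 0$ fails precisely on those wrap-around pairs: for $x=(0,\dots)$, $y=(R-1,0,\dots)$ and $k=(-1,0,\dots)$ one has $\va{y-x+Rk}=1$, so that term is of order $G(0,0)$, not $O(R^{\alpha-d})$ --- this term \emph{is} the wrap-around contribution, and it is not small unless one first rules out concentration of $f_0$ near the boundary. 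Second, even for well-separated pairs the sum $\sum_{k\neq0}\pare{R\va{k}}^{\alpha-d}$ diverges, since $\sum_{k\neq 0}\va{k}^{\alpha-d}\sim\sum_m m^{\alpha-1}=\infty$: the periodization of the unkilled Green function is infinite (the projected walk on $\T_R$ is recurrent), and the finite error $\frac{C}{\lambda R^d}$ of Lemma \ref{ineqgreen} is produced by the exponential killing, which you have already spent in your first display. (Your exponent algebra is also off: $\alpha-d+d/q'=\alpha-d/q$, which need not equal $-\alpha/q$ and can even be positive, e.g.\ $d=4$, $\alpha=2$, $q=3$.)

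The missing ingredient is your route (a), and the paper supplies it by a translation-averaging trick rather than by smoothing. Writing $\CC_{r,R}$ for the $r$-neighbourhood of the faces of $[0,R]^d$, one has $\sum_{a\in[0,R]^d}\sum_{x\in\CC_{r,R}}f_0(x-a)^{(2q)'}=\card(\CC_{r,R})\,\nor{f_0}_{(2q)'}^{(2q)'}\leq 2drR^{d-1}$, so some translate $f_{0,a}$ --- which has the same $(2q)'$-norm and the same value of the torus quadratic form --- carries mass at most $\frac{2dr}{R}=\epsilon$ on $\CC_{r,R}$. The dangerous wrap-around pairs (torus distance at most $r$ but realized through a nontrivial lattice translate) necessarily have both points in $\CC_{r,R}$, and after truncating by a cutoff $\psi$ supported in $[0,R]^d$ the remaining cross terms $\sum_{x\in[0,R]^d,\,y\in\CC_{r,R}}f_0(x)G(x-y)f_0(y)$ are bounded via H\"older by $\epsilon^{\frac{2q-1}{2q}}\pare{\sum_{z\in[-R,R]^d}G^q(z)}^{1/q}$, which vanishes as $\epsilon\to0$. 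Without this preliminary re-centering of the optimizer, your step (iii) cannot be completed, so as written the proposal has a genuine gap at its central step.
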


\begin{proof}
By definition $ \rho_1(a,R,T) 
= \sup\limits_{f}\acc {\sum\limits_{x,y \in \T_R }f(x)G_{R,\lambda}(x-y)f(y)\, ; \nor{f}_{(2q)',R}=1}$.\\
Since the space of $\{f/\nor{f}_{(2q)',R} = 1\}$ is compact there exists $f_0\ \in l_{(2q)'}(\T_R)$
realizing the supremum. Of course $f_0\geq 0$ since the supremum is obtained with  non-negative function.\\
Let $0<r<R$ and define
\[
\CC_{r,R} = \cup_{i=1}^d
\acc{ x\in \Z^d \, ; 0 \leq x_i \leq r \mbox{ or } R-r \leq x_i \leq R} \, .
\]
We can assume that $\sum\limits_{x\in\CC_{r,R}} f_0(x)^{(2q)'} \leq \frac{2dr}{R}$.
Indeed on one side we have
\begin{align*}
&\sum\limits_{a \in [0,R]^d}  \sum\limits_{x \in \CC_{r,R}}  f_0(x-a)^{(2q)'}  
=  \sum\limits_{x \in \CC_{r,R}} \sum\limits_{a \in [0,R]^d}  f_0(x-a)^{(2q)'} 
\\
& =  \sum\limits_{x \in \CC_{r,R}} \sum\limits_{x \in \T_R} f_0(x)^{(2q)'} 
= \card(\CC_{r,R}) \nor{f_0}_{(2q)'}^{(2q)'}
\leq 2d r R^{d-1},
\end{align*}
and on the opposite side we have
\[ \sum_{a \in [0,R]^d} \sum_{x \in \CC_{r,R}} f_0(x-a)^{(2q)'} 
\geq R^d \inf_{a \in [0;R]^d} \sum_{x \in \CC_{r,R}} f_0(x-a)^{(2q)'} \, .
\]
Thus \[\inf_{a \in [0;R]^d}\{\sum_{x \in \CC_{r,R}} f_0(x-a)^{(2q)'}\}   \leq\frac{2dr}{R} .\]
Moreover $f_{0,a}(x):= f_0(x-a)$
 is a periodic function of period $R$. Note that 
$\nor{f_{0,a}}_{(2q)',R} = \nor{f_0}_{(2q)',R}$ and  
$\sum\limits_{x,y \in \T_R }f_{0,a}(x)G_{R,\lambda}(x-y)f_{0,a}(y)=\sum\limits_{x,y \in \T_R }f_0(x)G_{R,\lambda}(x-y)f_0(y)$. \\
Finally, we can assume that 
\begin{equation}
\label{m}
\sum\limits_{x\in\CC_{r,R}} f_0(x)^{(2q)'} \leq \frac{2dr}{R}.
\end{equation}
Let $\psi: \Z^d \mapsto [0,1]$ be a truncature function satisfying 
\[ \left\{ \begin{array}{ll}
	\psi(x) = 0 & \mbox{ if } x \notin [0;R]^d \, 
	\\
	\psi(x) = 1 & \mbox{if } x \in [0;R]^d \backslash \CC_{r,R} \, .
	\end{array} 
\right.
\] 
Let $g_{0}=\frac{\psi f_0}{\nor{\psi f_0}_{(2q)'}}$ be our candidate to realize the supremum in the definition of $\rho(q)$. Fix $\epsilon \in ]0,1[$ and take $r=\frac{\epsilon R}{2d}$. First we can remark that $\nor{\psi f_0}_{(2q)'}>0$. Indeed:
\begin{equation*} 
\nor{\psi f_0}^{(2q)'}_{(2q)'} \geq \sum_{x \in [0;R]^d} f_0^{(2q)'}(x) - 
 \sum_{x \in \CC_{r,R}} f_0^{(2q)'}(x) 
 \geq  1- \frac{2dr}{R}= 1- \epsilon >0.   
\end{equation*} 

 By Lemma \ref{ineqgreen}, there exists a constant $C$ such that $\forall\lambda ,R>0$, $G(x)\geq G_{R,\lambda}(x)-\frac{C}{\lambda R^d}$, hence:
\begin{align}
&\nonumber\sum\limits_{x,y \in \mathbb{Z}^d} g_{0}(x)G(x-y)g_0(y) \\
=& \nonumber\frac{1}{\nor{\psi f_0}_{(2q)'}^2} \sum\limits_{x,y \in \mathbb{Z}^d }\psi(x)f_0(x)G(x-y)\psi(y)f_0(y)\\
\geq & \nonumber \sum\limits_{x,y \in \mathbb{Z}^d }\psi(x)f_0(x)G(x-y)\psi(y)f_0(y)\\
\geq & \nonumber \sum\limits_{x,y \in [0,R]^d}f_0(x)G(x-y)f_0(y)-2\sum\limits_{x\in [0,R]^d,y \in \CC_{r,R} }f_0(x)G(x-y)f_0(y)\\
=&\label{n}\rho_1(a,R,T)-\frac{C}{\lambda R^d}\pare{\sum\limits_{x \in [0,R]^d}f_0(x)}^2-2\sum\limits_{x\in [0,R]^d,y \in \CC_{r,R}}f_0(x)G(x-y)f_0(y).
\end{align}
Let us work on (\ref{n}). We first show that $\sum\limits_{x \in [0,R]^d }f_0(x) \leq R^\frac{d}{2q}$:
\begin{eqnarray}
\label{p}
\sum\limits_{x\in [0,R]^d  }f_0(x) &\leq &\pare{\sum\limits_{x\in [0,R]^d  }f_0^{(2q)'}(x)}^\frac{1}{(2q)'}  (R^d) ^\frac{1}{2q}
= R^\frac{d}{2q}.
\end{eqnarray}

We control now $\sum\limits_{x\in [0,R]^d,y \in \CC_{r,R} }f_0(x)G(x-y)f_0(y) $. Using (\ref{m}) and the fact that $\nor{f_0}_{(2q)',R}=1$ we have:

\begin{align}
&\nonumber\sum\limits_{x\in [0,R]^d,y \in \CC_{r,R} }f_0(x)G(x-y)f_0(y) \\
\nonumber = &\sum\limits_{x\in [0,R]^d,y \in \CC_{r,R}}f_0^{1/(2q-1)}(x)f_0^{1/(2q-1)}(y)G(x-y)f_0^{\frac{2(q-1)}{2q-1}}(x)f_0^{\frac{2(q-1)}{2q-1}}(y)\\
\leq &\nonumber  \pare{\sum\limits_{x\in [0,R]^d,y \in \CC_{r,R}}f_0^{q/(2q-1)}(x)f_0^{q/(2q-1)}(y)G^q(x-y)}^{1/q}  \pare{\sum\limits_{x\in [0,R]^d,y \in \CC_{r,R}}f_0^{\frac{2q}{2q-1}}(x)f_0^{\frac{2q}{2q-1}}(y)}^{(q-1)/q}\\
 \leq &\nonumber \pare{\sum\limits_{z\in[-R,R]^d}G^q(z)\sum\limits_{y\in\CC_{r,R}}f_0^{q/(2q-1)}(z+y)f_0^{q/(2q-1)}(y) }^{1/q} \pare{\sum\limits_{x\in [0,R]^d}f_0^{\frac{2q}{2q-1}}(x)}^\frac{q-1}{q}\\
 &\nonumber \pare{\sum\limits_{x\in \CC_{r,R}}f_0^{\frac{2q}{2q-1}}(x)}^\frac{q-1}{q}\\
 \leq &\nonumber\epsilon^\frac{q-1}{q}\pare{\sum\limits_{z\in[-R,R]^d}G^q(z)\pare{\sum_{y\in\CC_{r,R}}f_0^{\frac{2q}{2q-1}}(y)}^{1/2}\pare{\sum_{y\in\CC_{r,R}}f_0^{\frac{2q}{2q-1}}(z+y)}^{1/2}}^{1/q}\\
 \leq &\label{q}\epsilon^\frac{2q-1}{2q}\pare{\sum\limits_{z\in [-R,R]^d}G^q(z)}^{1/q}.
\end{align}

Finally, putting together (\ref{n}),(\ref{p}) and (\ref{q}), we deduce that:
\begin{equation*}
\sum\limits_{x,y \in \mathbb{Z}^d} g_0(x)G(x-y)g_0(y) 
\geq \rho_1(a,R,T) - R^\frac{d}{q}\frac{C}{\lambda R^d} -2\epsilon^\frac{2q-1}{2q}\pare{\sum\limits_{z\in [-R,R]^d}G^q(z)}^{1/q}  .
\end{equation*}
Let $\epsilon\rightarrow 0$: $\sum\limits_{x,y \in \mathbb{Z}^d} g_0(x)G(x-y)g_0(y) 
\geq \rho_1(a,R,T) -  \frac{C}{\lambda R^{d/q'}}.$

Hence, \[ \sup\limits_{g} \acc{\sum\limits_{x,y \in \mathbb{Z}^d} g(x)G(x-y)g(y) ,\ \nor{g}_{(2q)'}=1, \text{ supp}(g)\subset [0,R]^d}\geq \rho_1(a,R,T) - \frac{C}{\lambda R^{d/q'}}.\]
Therefore,  \[ \sup\limits_{g} \acc{\sum\limits_{x,y \in \mathbb{Z}^d} g(x)G(x-y)g(y) ,\ \nor{g}_{(2q)'}=1, \text{ supp}(g)\ \text{compact}}\geq \rho_1(a,R,T) -  \frac{C}{\lambda R^{d/q'}}.\]

Then we take a sequence $T_n\rightarrow +\infty$ such that $\rho_1(a,R,T_n)\rightarrow \rho_1(a)$. Hence by definition of $\rho(q)$ we obtain: 
 \[ \rho(q)\geq \rho_1(a) .\]
Then we take a sequence $a_n\rightarrow 0$ such that $\rho_1(a_n)\rightarrow \rho_1$. Hence, 
\[ \rho(q)\geq \rho_1 .\]
\end{proof}

\begin{prop}\label{kappafini}
 Under assumption 1,
\begin{enumerate}
\item If $q(d-\alpha)>d$ then $0<\rho(q)<+\infty$.
\item  If $q(d-\alpha)=d$ then $0<\kappa (q)<+\infty$.
\end{enumerate}
\end{prop}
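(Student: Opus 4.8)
The plan is to treat the two claims separately, since they require different inputs. For the supercritical case $q(d-\alpha)>d$, I would use the pointwise estimate on the Green function: by the results of Le Gall and Rosen (or Bass and Levin, Theorem 1.1 in \cite{BassLevin}, integrated in time), one has $G(0,x)=O(|x|^{\alpha-d})$. Since $q(d-\alpha)>d$ means $(d-\alpha)q>d$, the function $x\mapsto |x|^{q(\alpha-d)}$ is summable on $\mathbb{Z}^d$, hence $\nor{G}_q=\left(\sum_{x}G(0,x)^q\right)^{1/q}<\infty$. By Young's (or Hölder's) convolution inequality, $\langle g,Gg\rangle \leq \nor{Gg}_{2q}\nor{g}_{(2q)'}\leq \nor{G}_q\nor{g}_{(2q)'}^2$, using $1+\frac{1}{2q}=\frac{1}{q}+\frac{1}{(2q)'}$. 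Taking the supremum over $g$ with $\nor{g}_{(2q)'}=1$ and compact support gives $\rho(q)\leq \nor{G}_q<\infty$. For the strict positivity $\rho(q)>0$, it suffices to plug in a single test function, e.g. $g=\delta_0$, which gives $\langle g,Gg\rangle=G(0,0)>0$; since $\nor{\delta_0}_{(2q)'}=1$, this is an admissible competitor, so $\rho(q)\geq G(0,0)>0$.

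For the critical case $q(d-\alpha)=d$, the pointwise bound on $G$ no longer gives summability of $G^q$ (it is exactly the borderline exponent), so I would instead work with $\kappa(q)$ directly and invoke the equivalence $\kappa(q)=1/\rho(q)$ from Proposition \ref{kapparho} (once that is established) — but more robustly, I would prove $0<\kappa(q)<\infty$ via a Sobolev/Gagliardo–Nirenberg inequality for the stable generator. The key input is a Nash-type or Sobolev inequality: by the work of Varopoulos \cite{Varopoulos} relating on-diagonal heat kernel decay to Sobolev inequalities, together with the transition probability estimate $p_t(0,0)\leq Ct^{-d/\alpha}$ from Bass and Levin \cite{BassLevin}, one obtains an inequality of the form $\nor{f}_{2d/(d-\alpha)}^2\leq C\langle f,-Af\rangle$ for $f\in l^2(\mathbb{Z}^d)$. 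Now observe that the critical condition $q(d-\alpha)=d$ means precisely $2q=\frac{2d}{d-\alpha}$, so this Sobolev inequality reads $\nor{f}_{2q}^2\leq C\langle f,-Af\rangle$ on the set $\nor{f}_2=1$ (the $l^2$-normalization being harmless by homogeneity after interpolation — more carefully, one interpolates $\nor{f}_{2q}$ between $\nor{f}_2$ and the critical Sobolev norm, but at criticality $2q$ is itself the critical exponent so no interpolation is needed). This yields directly $\inf_f\{\langle f,-Af\rangle/\nor{f}_{2q}^2 : \nor{f}_2=1\}\geq 1/C>0$, i.e. $\kappa(q)>0$. For finiteness $\kappa(q)<\infty$, plug in any fixed compactly supported $f$ with $\nor{f}_2=1$: then $\langle f,-Af\rangle<\infty$ (the generator is bounded on finitely supported functions, or use that $\mu$ has moments of order $\alpha$... actually $\langle f,-Af\rangle=\frac12\sum_{x,y}\mu(y-x)(f(y)-f(x))^2<\infty$ for $f$ finitely supported since only finitely many terms with $f(x)\neq f(y)$ contribute and $\sum_z\mu(z)<\infty$) and $\nor{f}_{2q}^2>0$, so the infimum is finite.

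The main obstacle I anticipate is the critical case: establishing the Sobolev inequality $\nor{f}_{2d/(d-\alpha)}^2\leq C\langle f,-Af\rangle$ with the correct exponent is where the real content lies, because the naive approach via $\nor{G}_q$ fails exactly at the critical exponent. I would need to carefully cite or reproduce the Varopoulos equivalence between ultracontractivity of the semigroup $e^{tA}$ (i.e. $\nor{e^{tA}}_{1\to\infty}\leq Ct^{-d/\alpha}$, equivalent to the on-diagonal bound $p_t(0,0)\leq Ct^{-d/\alpha}$) and the Sobolev inequality with Sobolev exponent $2d/(d-\alpha)$, and verify that the Bass–Levin estimate indeed provides the required heat-kernel upper bound under Assumption 1. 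A subtlety is that the Bass–Levin bound $p_t(x,y)\leq C(t^{-d/\alpha}\wedge t|x-y|^{-d-\alpha})$ gives $p_t(0,0)\leq Ct^{-d/\alpha}$ only for $t$ bounded away from $0$ (for small $t$ on the lattice $p_t(0,0)\to 1$), but this is exactly the regime relevant for the Sobolev inequality on $\mathbb{Z}^d$ and the small-$t$ behavior only affects lower-order terms; alternatively one works with the discrete Nash inequality directly. Modulo this analytic input, both claims reduce to one-line test-function computations for the lower bounds and standard convolution/Sobolev estimates for the upper bounds.
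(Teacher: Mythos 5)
Your proposal is correct and follows essentially the same route as the paper: $\rho(q)\geq G(0,0)$ via $g=\delta_0$ and $\rho(q)\leq\nor{G}_q<\infty$ via the Le Gall--Rosen estimate together with a Young/H\"older convolution bound in the supercritical case, and the Varopoulos equivalence combined with the Bass--Levin on-diagonal bound $p_t(x,y)\leq Ct^{-d/\alpha}$ (with $\nu=2d/\alpha>2$ and $2q=2\nu/(\nu-2)$) in the critical case, plus a compactly supported test function for $\kappa(q)<\infty$. The only cosmetic difference is that you invoke Young's inequality directly where the paper writes out the H\"older splitting by hand, and your worry about small $t$ is harmless since $p_t\leq 1\leq t^{-d/\alpha}$ for $t\leq 1$.
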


\begin{proof}
\begin{enumerate}
\item It is easy to see that $\rho (q)>0$. Indeed, taking $f=\delta_0$ gives $\rho (q)\geq G_d(0,0)$. 
Now we show that $\rho(q)$ is finite. We proceed in the same way that in proposition \ref{rho1}, for all $f$ with compact support such that $\nor{f}_{(2q)'}=1$,
\begin{equation}
\begin{array}{l}
\sum\limits_{x \in \mathbb{Z}^d }f(x)G(x-y)f(y) 
= \sum\limits_{x,y\in\mathbb{Z}^d}f^{1/(2q-1)}(x)f^{1/(2q-1)}(y)G(y-x)f^{\frac{2(q-1)}{2q-1}}(x)f^{\frac{2(q-1)}{2q-1}}(y)\\
 \leq  \pare{\sum\limits_{x,y\mathbb{Z}^d}f^{q/(2q-1)}(x)f^{q/(2q-1)}(y)G^q(y-x)}^{1/q}  \pare{\sum\limits_{x,y\in \mathbb{Z}^d}f^{\frac{2q}{2q-1}}(x)f^{\frac{2q}{2q-1}}(y)}^{(q-1)/q}\\
 \leq  \pare{\sum\limits_{x\in\mathbb{Z}^d}G^q(x)\sum\limits_{y\in\mathbb{Z}^d}f^{q/(2q-1)}(x+y)f^{q/(2q-1)}(y) }^{1/q}  \pare{\sum\limits_{x\in \mathbb{Z}^d}f^{\frac{2q}{2q-1}}(x)}^\frac{2(q-1)}{q}\\
 \leq \pare{\sum\limits_{x\in\mathbb{Z}^d}G^q(x)\pare{\sum_{y\in\mathbb{Z}^d}f^{\frac{2q}{2q-1}}(y)}^{1/2}\pare{\sum\limits_{y\in \mathbb{Z}^d}f^{\frac{2q}{2q-1}}(x+y)}^{1/2}}^{1/q}\\
 =\pare{\sum\limits_{x\in [0,R]^d}G^q(x)}^{1/q}=\nor{G}_q.
\end{array}
\end{equation}
Then we take the supremum over $f$. Moreover, thanks to the work of Le Gall and Rosen \cite{LeGallRosen}, we know that $G(0,x)=O (\vert x\vert^{\alpha-d})$. Then $\nor{G}_q$ is finite since $q(d-\alpha)>d$.

\item
To prove $\kappa(q)<\infty$ it suffices to take $f=\delta_0$. Indeed $\kappa(q)\leq <-A\delta_0,\delta_0>=1-\mu(0)<+\infty$. Let us now prove that $\kappa (q)>0$. The solution comes from the following result due to Varopoulos in \cite{Varopoulos}:

Let $\nu >2$. If $p_t$ is the transition probability of a symmetric Markov process $(Y_t,t\geq 0)$ defined on a measure space $X$, with $V$ is the domain of the generator of $(Y_t,t\geq 0)$ and $\mathcal{E}$ its Dirichlet form. Then the following assertions are equivalent:
\begin{enumerate}
\item 
$\exists C>0 \text{ such that } \forall x,y\in\mathbb{Z}^d, p_t(x,y)\leq \frac{C}{t^{\nu/2}} .$
\item $\exists C'>0$ such that $\forall f\in\mathcal{K}\cap V$,$\nor{f}_{\frac{2\nu}{\nu-2}}^2 \leq C' \mathcal{E} (f,f),$\\
where $\mathcal{K}=\acc{f\in L^\infty (X), \text{ supp}(f) \text{ compact}}$.
\end{enumerate}
By Proposition 4.2 in \cite{BassLevin} due to Bass and Levin, we know that 
$$\exists C>0 \text{ such that } \forall x,y\in\mathbb{Z}^d,\ p_t(x,y)\leq C t^{-\frac{d}{\alpha}}.$$
Since $q(d-\alpha)=d$, $\nu=\frac{2d}{\alpha}>2$. So, there exists
 $C'>0$ such that $\forall  f\in\mathcal{K}\cap V$, $\nor{f}_{2q}^2=\nor{f}_{\frac{2d}{d-\alpha}}^2\leq C' \mathcal{E}(f,f)$. 
 Let $f$  with compact support such that $\nor{f}_2=1$. Of course $f\in\mathcal{K}$. If $f\in V$ then
$\nor{f}_{\frac{2d}{d-\alpha}}^2\leq C' \mathcal{E}(f,f)$. If $f\not\in V$ then $\mathcal{E}(f,f)=+\infty$ and the inequality is also true. Thus,
\[\forall f \text{ with compact support such that }\nor{f}_2=1,\nor{f}_{2q}^2\leq C' \mathcal{E}(f,f) .\]
Therefore, taking the infimum over all function $f$ such that $\nor{f}_2=1$ we have: 
$$\inf\limits_{f}\acc{\frac{\mathcal{E}(f,f)}{\nor{f}_{2q}^2},\nor{f}_2=1}=\inf\limits_{f}\acc{\frac{\mathcal{E}(f,f)}{\nor{f}_{2q}^2},\nor{f}_2=1,\text{ supp(f) \text{ compact}}}\geq \frac{1}{C'}$$
\end{enumerate}
\end{proof}

\begin{prop}
\label{kappa} Under assumption 1, if $q(d-\alpha)\geq d$ then
 $\kappa (q)=\frac{1}{\rho(q)}$. 
\end{prop}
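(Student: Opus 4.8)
\noindent\emph{Strategy of the proof.} The plan is to prove the equivalent identity $\rho(q)=1/\kappa(q)$ by realizing both quantities as the same variational constant, through a Legendre--Fenchel duality between the Dirichlet form $\mathcal{E}(f,h):=<f,-Ah>$ and the Green energy $f\mapsto <f,Gf>$, where $G=(-A)^{-1}$. On $\Z^d$ a compactly supported function is finitely supported; by the density argument already used in Proposition~\ref{kappafini} the infimum defining $\kappa(q)$ may be taken over finitely supported $f$, and since the ratio $\mathcal{E}(f)/\nor{f}_{2q}^2$ is unchanged under $f\mapsto tf$ one gets
\[
\frac{1}{\kappa(q)}=\sup\acc{\frac{\nor{f}_{2q}^2}{\mathcal{E}(f)}\,;\ f\text{ finitely supported},\ f\not\equiv 0},\qquad
\rho(q)=\sup\acc{<g,Gg>\,;\ g\text{ finitely supported},\ \nor{g}_{(2q)'}=1}.
\]
For finitely supported $g$ one has $-A(Gg)=g$, hence $\mathcal{E}(Gg,h)=<g,h>$ for every finitely supported $h$ and $\mathcal{E}(Gg)=<g,Gg><\infty$.

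First I would establish, for each finitely supported $g$, the Legendre identity
\[
<g,Gg>=\sup\acc{2<g,h>-\mathcal{E}(h)\,;\ h\text{ finitely supported}}.
\]
The inequality ``$\ge$'' is immediate from $2<g,h>-\mathcal{E}(h)=2\mathcal{E}(Gg,h)-\mathcal{E}(h)\le 2\sqrt{\mathcal{E}(Gg)\,\mathcal{E}(h)}-\mathcal{E}(h)\le\mathcal{E}(Gg)$, using the Cauchy--Schwarz inequality for $\mathcal{E}$ and $2ab-b^{2}\le a^{2}$. For ``$\le$'' I would approximate $Gg$ in Dirichlet norm by finitely supported functions $h_{n}$, i.e.\ $\mathcal{E}(h_{n}-Gg)\to 0$; then $\mathcal{E}(h_{n})\to\mathcal{E}(Gg)$ and $<g,h_{n}>=\mathcal{E}(Gg,h_{n})\to\mathcal{E}(Gg)=<g,Gg>$, so $2<g,h_{n}>-\mathcal{E}(h_{n})\to <g,Gg>$. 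Such an approximating sequence exists because the walk is transient ($\alpha<d$), its Dirichlet form on $\ell^{2}(\Z^{d})$ is regular, and $Gg$ lies in the associated extended Dirichlet space; alternatively one can take $h_{n}=Gg\cdot\chi_{n}$ for a Lipschitz cut-off $\chi_{n}$ equal to $1$ on the ball of radius $n$ and supported in the ball of radius $2n$, and verify $\mathcal{E}(Gg\cdot\chi_{n})\to\mathcal{E}(Gg)$ using $Gg(x)=O(|x|^{\alpha-d})$ (from \cite{LeGallRosen}) and the elementary bound $\sum_{z}\mu(z)\big(\chi_{n}(x)-\chi_{n}(x+z)\big)^{2}\le Cn^{-\alpha}$.

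Next I would exchange the two suprema and optimize the scaling. For a fixed finitely supported $h$, duality in $\ell^{(2q)'}$ gives $\sup\{<g,h>\,;\ g\text{ finitely supported},\ \nor{g}_{(2q)'}=1\}=\nor{h}_{2q}$, the extremizer $g=\operatorname{sgn}(h)\,|h|^{2q-1}/\nor{h}_{2q}^{2q-1}$ being again finitely supported; hence, by the Legendre identity,
\[
\rho(q)=\sup_{\nor{g}_{(2q)'}=1}\ \sup_{h}\big(2<g,h>-\mathcal{E}(h)\big)
=\sup_{h}\Big(2\sup_{\nor{g}_{(2q)'}=1}<g,h>-\mathcal{E}(h)\Big)
=\sup_{h}\big(2\nor{h}_{2q}-\mathcal{E}(h)\big),
\]
all suprema being over finitely supported functions. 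Finally, for $h\not\equiv 0$ one has $\sup_{t>0}\big(2t\nor{h}_{2q}-t^{2}\mathcal{E}(h)\big)=\nor{h}_{2q}^{2}/\mathcal{E}(h)$, so optimizing the scaling first gives $\sup_{h}\big(2\nor{h}_{2q}-\mathcal{E}(h)\big)=\sup_{h}\nor{h}_{2q}^{2}/\mathcal{E}(h)=1/\kappa(q)$. Combining the three displays yields $\rho(q)=1/\kappa(q)$.

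The only genuinely delicate step is the approximation in the Legendre identity: $Gg$ need not belong to $\ell^{2}(\Z^{d})$ --- this fails precisely when $d\le 2\alpha$, which is compatible with $q(d-\alpha)\ge d$ --- so one cannot just use that finitely supported functions form a core of the domain of $\mathcal{E}$, and must instead argue in the extended Dirichlet space (or carry out the cut-off estimate above, where it is essential that $\chi_{n}$ have bounded increments, since a sharp truncation of $Gg$ would produce an uncontrolled boundary contribution to the energy). The remaining ingredients are only the Cauchy--Schwarz inequality for $\mathcal{E}$ and $\ell^{p}$--$\ell^{p'}$ duality.
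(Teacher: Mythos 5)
Your proof is correct in outline and takes a genuinely different route from the paper's. The paper obtains the inequality $\frac{1}{\rho(q)}\leq\kappa(q)$ \emph{for free} from the probabilistic sandwich $-\kappa(q)\leq\liminf\frac{1}{b_T}\log P[I_T\geq b_T^q]\leq\limsup\frac{1}{b_T}\log P[I_T\geq b_T^q]\leq-\frac{1}{\rho(q)}$ (Theorems \ref{fpgd} and \ref{tlb} together with Proposition \ref{rho1}), and only the converse $\kappa(q)\leq\frac{1}{\rho(q)}$ is proved variationally: for a near-maximizer $g$ of $\rho(q)$ one substitutes $f=Gg/\nor{Gg}_{2q}$ into the infimum defining $\kappa(q)$ and uses $<g,Gg>=<-AGg,Gg>\geq\nor{Gg}_{2q}^2\,\kappa(q)$, preceded by a contradiction argument ruling out $\rho(q)=+\infty$. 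Your Legendre--Fenchel computation proves both inequalities analytically, independently of the large deviations theorems, which makes the identity self-contained and also dispenses with the separate finiteness argument (the identity holds in the extended reals). Both proofs ultimately rest on the same summation-by-parts facts, $\mathcal{E}(Gg,h)=<g,h>$ for finitely supported $h$ and $\mathcal{E}(Gg,Gg)=<g,Gg>$; the paper secures them by asserting in (\ref{az}) that $\nor{Gg}_2<+\infty$ for compactly supported $g$ --- which, as you observe, can fail when $d\leq 2\alpha$ and $\sum_x g(x)\neq 0$, since then $Gg(x)\sim c\va{x}^{\alpha-d}$ --- whereas you correctly isolate this as the delicate point and propose a valid repair via the extended Dirichlet space of the transient walk, or the explicit Lipschitz cut-off using $Gg(x)=O(\va{x}^{\alpha-d})$ and $\sum_z\mu(z)(\chi_n(x)-\chi_n(x+z))^2\leq Cn^{-\alpha}$. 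A complete write-up would still need to carry out that cut-off estimate (or cite the Fukushima--Oshima--Takeda theory), but the ingredients you name do suffice, so I see no gap in the approach, only an unexecuted but standard technical step.
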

\begin{proof}
By theorem \ref{fpgd}, theorem \ref{tlb} and proposition \ref{rho1} we know that $\frac{1}{\rho(q)}\leq \kappa (q)$. So we just have to prove that $\kappa (q) \leq \frac{1}{\rho (q)}$.\\
By definition $\rho (q) =  \sup\limits_{g} \acc {<g,Gg>, \text{ supp}(g) \text{ compact},\ \nor{g}_{(2q)'}=1}$. Note that 
\begin{equation}
\label{az}
\rho (q) =  \sup\limits_{g} \acc {<g,Gg>, \text{ supp}(g) \text{ compact},\ \nor{g}_{(2q)'}=1, \ \nor {Gg}_2<+\infty}.
\end{equation}
Indeed if $g$ has compact support and $\nor{g}_{(2q)'}=1$ then $\nor{Gg}_2<+\infty$.\\
We have seen in proposition \ref{kappafini} that $\rho(q)>0$ when $q(d-\alpha)>d$ but the proof is also true when $q(d-\alpha)=d$. Furthermore proposition \ref{kappafini} gives us that if $q(d-\alpha)>d$ then $\rho(q)$ is finite. We proceed by contradiction to see that it is also true when $q(d-\alpha)=d$ using the same method that Chen and M\"{o}rters in \cite{ChenMorters}.

Assume that $\rho(q)=+\infty$. Then by (\ref{az}), $\forall B>0$
there exists $g$ with compact support, $\nor{g}_{(2q)'}=1$ 	and $\nor{Gg}_2<+\infty$ such that $<g,Gg>\ \geq B$.\\
Note that $<g,Gg> \leq \nor{g}_{(2q)'} \nor{Gg}_{2q}=\nor{Gg}_{2q}$. So $\nor{Gg}_{2q}\geq B$.\\
Then we set $f=\frac{Gg}{\nor{Gg}_{2q}}$. We note that $\nor{f}_{2q}= 1$ and $\nor{f}_2<+\infty$, hence:
\begin{align}
\nonumber <g,Gg> &= <-AGg,Gg> \\
\nonumber              &= \nor{Gg}_{2q}^2 <-\frac{AGg}{\nor{Gg}_{2q}}, \frac{Gg}{\nor{Gg}_{2q}}>\\
\nonumber              &\geq  \nor{Gg}_{2q}^2\inf\limits_f \acc{<-Af,f>, \ \nor{f}_{2q}= 1,\ \nor{f}_2<+\infty}\\
\nonumber              &= \nor{Gg}_{2q}^2\inf\limits_f \acc{\frac{<-Af,f>}{\nor{f}_2^2} \nor{f}_2^2, \ \nor{f}_{2q}=1,\      \nor{f}_2<+\infty}\\
\label{pw}&=  \nor{Gg}_{2q}^2\inf\limits_g \acc{\frac{<-Ag,g>}{\nor{g}_{2q}^2}, \ \nor{g}_2= 1}
=  \nor{Gg}_{2q}^2\kappa (q)
\end{align}
with $g=\frac{f}{\nor{f}_2}$. Therefore,
 $$\kappa(q)\leq \frac{<g,Gg>}{ \nor{Gg}_{2q}^2}\leq \frac{1}{\nor{Gg}_{2q}}\leq \frac{1}{B}$$
 then letting $B\rightarrow +\infty$ we have $\kappa(q)=0$, which is in contradiction with proposition \ref{kappafini}. Therefore $\rho(q)$ is finite.

Now we proceed in the same way that previously.  Let $\epsilon \in]0,\rho(q)[$, by (\ref{az})
there exists $g$ with compact support, $\nor{g}_{(2q)'}=1$ 	and $\nor{Gg}_2<+\infty$ such that $\rho (q) \geq\ <g,Gg>\ \geq \rho (q)-\epsilon$.
Moreover we have $\nor{Gg}_{2q}\geq \rho (q)-\epsilon$,
then we set $f=\frac{Gg}{\rho (q)-\epsilon}$ and obtain
\begin{align*}
\rho (q) \geq\ <g,Gg>&\geq  (\rho (q)-\epsilon)^2 \inf\limits_f \acc{<-Af,f>, \ \nor{f}_{2q}\geq 1,\ \nor{f}_2<+\infty}\\
&= (\rho (q)-\epsilon)^2 \inf\limits_f \acc{<-Af,f>, \ \nor{f}_{2q}= 1,\ \nor{f}_2<+\infty}.
\end{align*}

Let $\epsilon \rightarrow 0$: $\frac{1}{\rho (q)}\geq \inf\limits_f \acc{<-Af,f>, \ \nor{f}_{2q}= 1,\ \nor{f}_2<+\infty}$.\\
Moreover we have seen in (\ref{pw}) that 
$\inf\limits_f \acc{<-Af,f>, \ \nor{f}_{2q}\geq 1,\ \nor{f}_2<+\infty} = \kappa (q)$, therefore $\kappa(q)\leq \frac{1}{\rho(q)}.$

\end{proof}

\textsc{Cl\'ement Laurent\\
LATP, UMR CNRS 6632\\
CMI, Universit\'e de Provence\\
39 rue Joliot-Curie, F-13453 Marseille cedex 13, France\\
laurent@cmi.univ-mrs.fr  }

\end{document}